\numberwithin{equation}{section} 
\newcommand{\ud}{\,d} 
\newcommand{\R}{\mathbb{R}}
\newcommand\cof{\operatorname{cof}}
\renewcommand{\div}{\operatorname{div}}
\newcommand{\tir}[1]{\ensuremath{\overline {#1}}} 
\newtheorem{thm}{Theorem}[section] 
\newtheorem{lemma}[thm]{Lemma}
\newtheorem{rem}[thm]{Remark}
\newtheorem{ass}[thm]{Assumption}
\def\whsq{\vbox to 5.8pt 
{\offinterlineskip\hrule 
\hbox to 5.8pt{\vrule height 
5.1pt\hss\vrule height 5.1pt}\hrule}}
\def\<{\langle} 
\def\>{\rangle} 
\def\PP{{\mathop{{\rm I}\kern-.2em{\rm P}}\nolimits}} 
\def\FF{{\mathop{{\rm I}\kern-.2em{\rm F}}\nolimits}}   
\def\ZZ{{\mathop{{\rm I}\kern-.2em{\rm Z}}\nolimits}} 
\newlength{\sidemargin} 
\begin{document}
\title[]{
Standard finite elements for the numerical resolution of the elliptic Monge-Amp\`ere equation: classical solutions}

\thanks{This work began when the author was supported in part by a 2009-2013 Sloan Foundation Fellowship. }
\author{Gerard Awanou}
\address{Department of Mathematics, Statistics, and Computer Science, M/C 249.
University of Illinois at Chicago, 
Chicago, IL 60607-7045, USA}
\email{awanou@uic.edu}  
\urladdr{http://www.math.uic.edu/\~{}awanou}

\maketitle

\begin{abstract}
We propose a new variational formulation of the elliptic Monge-Amp\`ere equation and show how classical Lagrange elements can be used for the numerical resolution of classical solutions of the equation.  Error estimates are given for Lagrange elements of degree $d \geq 2$ in dimensions 2 and 3. No jump term is used in the variational formulation.  
We propose to solve the discrete nonlinear system of equations by a time marching method  and numerical evidence is given which indicates that  one approximates weak solutions in two dimensions. 


\end{abstract}

\section{Introduction}
This paper addresses the numerical resolution of the Dirichlet problem for the Monge-Amp\`ere 
equation
\begin{equation}
\det  D^2 u = f  \ \text{in} \ \Omega, \quad u=g \ \text{on} \ \partial \Omega. 
\label{m1}
\end{equation}
 A classical solution of \eqref{m1} is a convex function $u \in C(\tir{\Omega}) \cap C^2 (\Omega)$ which satisfies \eqref{m1}. The domain $\Omega \subset \mathbb{R}^n, n=2,3$ is assumed to be convex with (polygonal) 
boundary $\partial \Omega$. 
Here $D^2 u=\bigg( \frac{\partial^2 u}{\partial x_i \partial x_j}\bigg)_{i,j=1,\ldots, n}  $ 
is the Hessian of $u$
and $f,g$ are given functions with $f \geq 0$ and $g \in C(\partial \Omega)$ with $g$ convex on any line segment in $\partial \Omega$.  A smooth solution of \eqref{m1} solves the variational problem: find $u \in W^{2,\infty}(\Omega)$ such that $u=g \ \text{on} \ \partial \Omega$ and $$\int_{\Omega}(\det D^2 u) v \ud x = 
\int_{\Omega} f v \ud x, \, \text{for all} \, v \in H^1_0(\Omega).$$

We propose to solve numerically \eqref{m1} with standard Lagrange finite element spaces $V_h$ of degree $d \geq 2$ by analyzing the (nonconforming) variational problem: find $u_h \in V_h \subset H^1(\Omega)$ such that $u_h=g_h \ \text{on} \ \partial \Omega$ for an interpolant $g_h$ of $g$ and
\begin{align} \label{m1h}
\sum_{K \in \mathcal{T}_h } \int_{K}  (\det D^2 u_h) v_h \ud x = 
\int_{\Omega} f v_h \ud x, \forall v_h \in V_h \cap H^1_0(\Omega).
\end{align}
Here $\mathcal{T}_h$ denotes a quasi-uniform, simplicial and conforming triangulation of the domain. Error estimates for smooth solutions are derived. 
We propose to solve the discrete nonlinear system of equations by a time marching method, c.f. Theorem \ref{errorest}.  Numerical evidence is given which indicates that  one approximates weak solutions in two dimensions.

Closely related to this paper are \cite{Brenner2010a, Brenner2010b,Neilan2013b}. Like the authors of these papers, we also use a fixed point argument but our approach is essentially different. No jump term is used in our variational formulation. We are able to give error estimates for Lagrange elements of degree $d \geq 2$ with no smoothness assumption on the boundary. 
This is achieved by a rescaling argument. The fixed point argument we use to establish the well-posedness of \eqref{m1h} also yields the theoretical convergence of the time marching iterative method. 

The use of the standard Lagrange finite element spaces in connection with the numerical resolution of \eqref{m1} also appears in mixed methods. A least squares formulation was used in \cite{GlowinskiICIAM07} and recently a direct mixed formulation was presented in \cite{Lakkis11}. The latter is essentially the limiting case of the mixed method for the vanishing moment methodology, c.f. \cite{Feng2009a} and the references therein. The vanishing moment methodology is a singular perturbation approach to the Monge-Amp\`ere  equation with the perturbation a multiple of the bilaplacian. The convergence and error estimates for the methods introduced in  \cite{GlowinskiICIAM07} are still open problems and mixed methods typically lead to large system of equations.

In view of having numerical results  for non smooth solutions, it is natural to use a time marching method, and not Newton's method, for solving the discrete nonlinear system of equations. 
The numerical experiments indicate that the method may be valid for the so-called viscosity solutions. This is a fascinating and challenging issue and its resolution involves additional new ideas different from the techniques for error analysis used in this paper. 
We wish to address this issue in a separate work \cite{Awanou-Std04}.

Our approach may be viewed as a variant of the method introduced in \cite{Brenner2010b}. 
As pointed out in  \cite{Brenner2010b}  a numerical method based on Lagrange elements and the formulation \eqref{m1h} does not work 
in theory in the sense
 it is difficult to use a fixed point argument which consists in linearization at the exact solution. The authors in \cite{Brenner2010b}  ingeniously added jump terms to facilitate the above approach. On the other hand, our numerical experiments indicate that the above approach works if the discrete nonlinear system of equations is solved by a time marching method. 
 An advantage of the time marching method is that the user only needs access to a Poisson solver to implement the scheme. The main advantage however is that one has numerical evidence of convergence for non-smooth solutions. 
 Obviously the time marching method can also be applied to the discretization proposed in  \cite{Brenner2010b} but we believe that in the context of non-smooth solutions the jump terms in the discretization proposed there may not be necessary. We have chosen not to treat curved boundaries for simplicity and to focus on the main ideas. The main motivation to assume that the domain is smooth and strictly convex is to guarantee the existence of a smooth solution for smooth data. One then faces the difficulty of practically imposing Dirichlet boundary conditions, a problem solved in  \cite{Brenner2010b} by the use of Nitsche method. Here instead we will make the assumption ubiquitous in finite element analysis of numerous problems that the solution is smooth on a polygonal domain. 

 We believe that the fixed point argument used in this paper and/or the strategy of rescaling the Monge-Amp\`ere equation would prove useful in resolving other outstanding issues about the numerical analysis of Monge-Amp\`ere type equations, see for example \cite{Awanou-Mixed-Time}.  For another example, our fixed point-rescaling argument provides an alternative to \cite{Neilan2013b} for the proof of  the well-posedness of the discretization proposed in \cite{Brenner2010b,Brenner2010a} for quadratic finite elements. Essentially, the rescaling argument is appropriate whenever an argument can be made that a result holds for the Monge-Amp\`ere equation provided the solution is sufficiently small. Thus, instead of describing the whole rescaling argument, one may simply prove results for the case when the exact solution is sufficiently small. 


In fact the results of this paper are similar to the ones announced in the context of $C^1$ conforming approximations in a technical report by the author \cite{AwanouPseudo10} but the analysis is more involved. Exploiting that similarity, pseudo transient continuation methods can be developed for \eqref{m1} by taking appropriate nonconforming discretizations of the iterative methods proposed in \cite{AwanouPseudo10}. We do not pursue this line of investigation in this paper. The properties of the Lagrange finite element spaces used in our analysis, namely an approximation property and inverse estimates, also hold for certain $C^1$ conforming approximations. 
Thus our error estimates hold for these as well. The error estimates hold for weaker assumptions on the exact solution, namely that $u \in W^{3,\infty}(K)$ on each element $K$, is strictly convex on each element and solves \eqref{m1h}. 

We organize the paper as follows. In section \ref{notation}, we give the notation used and recall some facts about determinants and Lagrange finite element spaces. The properties of the finite element spaces needed for our analysis are stated as well as the requirements on the exact solution. 
We prove existence and uniqueness of the discrete problem \eqref{m1h} with the convergence of the time marching method in section \ref{posedness}. 
In section \ref{numerical} we give the numerical results. 

\section{Notation and Preliminaries} \label{notation}
Let $\mathbb{P}_d$ denote the space of polynomials of degree less than or equal to $d$. We use the usual notation $L^p(\Omega), 2 \leq p \leq \infty$ for the Lebesgue spaces and $W^{s,p}(\Omega), 1 \leq s < \infty$ for the Sobolev spaces of elements of  $L^p(\Omega)$ with weak derivatives of order less than or equal to $s$ in $L^p(\Omega)$. The norms and semi-norms in $W^{s,p}(\Omega)$ are denoted by $||\, ||_{s,p}$ and $|\, |_{s,p}$ respectively and when $p=2$ we will simply use $||\, ||_{s}$ and $|\, |_{s}$. Thus the $L^p$ norm is denoted $||\, ||_{0}$. We will use the simpler notation $|| \ ||_{\infty}$ for the norm in $L^{\infty}(\Omega)$.

For a function defined on an element $K$ or more generally on a subdomain $S$, we will add $K$ or $S$ to the norm and semi-norm notation. We will need a broken Sobolev norm
$$
||v||_{s,p,\mathcal{T}_h} = \bigg( \sum_{K \in \mathcal{T}_h} ||v||_{s,p,K}^p\bigg)^{\frac{1}{p}},
$$
with the above conventions for the case when $p=2$.

For a matrix field $A$, we define $||A||_{\infty}=\max_{i,j=1,\ldots n} ||A_{ij}||_{\infty}$. 
We denote by $n$ the unit outward normal vector to $\partial \Omega$ and by $n_K$ the unit outward normal vector to $\partial K$ for an element $K$. 


For two matrices $A=(A_{ij})$ and $B=(B_{ij})$,  $A: B=\sum_{i,j=1}^n A_{ij} B_{ij}$ denotes their Frobenius inner product.  The divergence of a matrix field is understood as the vector obtained by taking the divergence of each row. We use the notation $D u$ to denote the gradient vector and for a matrix $A$, $\cof A$ denotes the matrix of cofactors of $A$.

A quantity which is constant is simply denoted by $C$. Throughout the paper, for a discrete function $v_h$, the Hessian $D^2 v_h$ is always computed element by element. We will assume that $0<h \leq 1$.

\subsection{Computations with determinants}


\begin{lemma} \label{MeanV}  For $u, v \in C^2(K)$ we have
$$
\det D^2 u - \det D^2 v = \cof(t D^2 u + (1-t) D^2 v): (D^2 u - D^2 v),
$$
for some $t \in [0,1]$.
\end{lemma}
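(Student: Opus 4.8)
The plan is to prove this pointwise by applying the one-dimensional mean value theorem to the scalar function obtained by evaluating the determinant along the line segment joining $D^2 v$ to $D^2 u$, using Jacobi's formula for the derivative of the determinant.

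First I would fix a point $x \in K$ and set $A = D^2 u(x)$, $B = D^2 v(x)$, two (symmetric) $n\times n$ matrices, and define $g : [0,1] \to \R$ by $g(t) = \det\big(t A + (1-t) B\big)$. Since the entries of $M(t) := tA + (1-t)B$ are affine in $t$ and the determinant is a polynomial in the matrix entries, $g$ is a polynomial in $t$, hence $C^\infty$ on $[0,1]$. The mean value theorem then produces some $t \in (0,1) \subset [0,1]$ with $g(1) - g(0) = g'(t)$, and we note $g(1) = \det D^2 u(x)$, $g(0) = \det D^2 v(x)$.

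It then remains to identify $g'(t)$. By the chain rule, $g'(t) = \sum_{i,j} \frac{\partial \det}{\partial M_{ij}}(M(t))\, M_{ij}'(t)$ with $M_{ij}'(t) = A_{ij} - B_{ij}$. The key identity $\frac{\partial \det}{\partial M_{ij}} = (\cof M)_{ij}$ follows from the Laplace (cofactor) expansion $\det M = \sum_{j} M_{ij}(\cof M)_{ij}$ along the $i$-th row, together with the observation that the cofactors $(\cof M)_{ij}$ occurring there involve no entry of the $i$-th row of $M$. Hence $g'(t) = \sum_{i,j}(\cof M(t))_{ij}(A_{ij}-B_{ij}) = \cof\big(tA + (1-t)B\big) : (A-B)$, and substituting $A - B = D^2 u(x) - D^2 v(x)$ gives the claimed identity at $x$; since $x \in K$ was arbitrary (with $t$ permitted to depend on $x$), the lemma follows.

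There is no serious obstacle here: the only point requiring a little care is Jacobi's formula $\partial \det / \partial M_{ij} = (\cof M)_{ij}$, and in particular that, with the ``matrix of cofactors'' convention for $\cof$ fixed in Section~\ref{notation}, the expansion $\det M = \sum_j M_{ij}(\cof M)_{ij}$ holds directly so that no transpose enters the pairing. One should also simply remark that the mean value theorem yields $t$ in the open interval, which is of course contained in the closed interval $[0,1]$ of the statement.
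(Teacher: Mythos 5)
Your proof is correct and takes essentially the same route as the paper: apply the mean value theorem to $t \mapsto \det\bigl(tD^2u + (1-t)D^2v\bigr)$ and use Jacobi's formula $\partial(\det A)/\partial A_{ij} = (\cof A)_{ij}$ via the chain rule. You merely spell out more explicitly the pointwise reduction to a scalar function and the cofactor-expansion derivation of Jacobi's formula, which the paper instead dispatches with a citation to Evans.
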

\begin{proof}
The result follows from the mean value theorem and the expression of the derivative of the mapping $F: u \to \det D^2 u$. We have $F'(u)(v) = (\cof D^2 u):D^2 v$. First note that  $\partial (\det A)/(\partial A_{ij}) = (\cof A)_{ij}$. See for example formula (23) p. 440 of \cite{Evans1998}. The result then follows from the chain rule.
\end{proof}

\begin{lemma} \label{cofbound} 
For $n=2$ and $n=3$, and two matrix fields $\eta$ and $\tau$
\begin{align}
||\cof (\eta):\tau||_{0} & \leq C ||\eta||_{\infty}^{n-1} ||\tau||_{0}  \label{det-bound} \\
||\cof (\eta) - \cof (\tau) ||_{0} & \leq C ( || t \eta + (1-t)\tau||_{\infty})^{n-2}||\eta-\tau||_{0}. \label{cof-bound2}
\end{align}
\end{lemma}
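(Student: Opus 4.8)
The plan is to prove both estimates by reducing pointwise (or element-wise) to algebraic facts about cofactor matrices, since $\cof$ is a polynomial map whose degree depends on $n$. For \eqref{det-bound}: each entry $(\cof \eta)_{ij}$ is, for $n=2$, $\pm$ an entry of $\eta$, and for $n=3$, a $2\times 2$ minor of $\eta$, hence a sum of products of two entries of $\eta$. So pointwise $|(\cof\eta)_{ij}| \le C\, |\eta|_\infty^{\,n-1}$ where $|\eta|_\infty$ is the max of $|\eta_{k\ell}|$ at that point, and therefore $|\cof(\eta):\tau| \le C\,|\eta|_\infty^{\,n-1}\, |\tau|_\infty \cdot (\text{number of terms})$. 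Integrating, I bound the factor coming from $\eta$ by its $L^\infty$ norm over $\Omega$, pull it out, and am left with $\| \, |\tau|_\infty \|_0 \le C \|\tau\|_0$ (all matrix norms being equivalent up to dimensional constants). That gives \eqref{det-bound}.

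For \eqref{cof-bound2} the idea is again to exploit that $\tau \mapsto \cof\tau$ is affine when $n=2$ and quadratic when $n=3$. When $n=2$, $\cof$ is actually a linear (in fact, an affine with zero constant term) map on matrices — each entry of $\cof\eta - \cof\tau$ is $\pm$ an entry of $\eta-\tau$ — so $\|\cof\eta - \cof\tau\|_0 \le C\|\eta-\tau\|_0$, which is exactly \eqref{cof-bound2} with the exponent $n-2 = 0$ (the prefactor $(\|\,t\eta+(1-t)\tau\,\|_\infty)^0 = 1$). When $n=3$, I use the identity $\cof\eta - \cof\tau = \cof(\eta) - \cof(\tau)$ expanded via the bilinearity of the second cofactor: writing $\cof$ for a $3\times 3$ matrix as a quadratic form, one has an identity of the shape $\cof(\eta) - \cof(\tau) = B(\eta,\eta-\tau) + B(\eta-\tau,\tau)$ for the symmetric bilinear map $B$ underlying $\cof$ (equivalently, differentiate $t \mapsto \cof(t\eta + (1-t)\tau)$ and use the mean value theorem, noting that $D(\cof)$ at a matrix $M$ is linear in $M$ for $n=3$). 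This produces a factor linear in $\eta - \tau$ and a factor linear in a convex combination $t\eta + (1-t)\tau$ for some $t\in[0,1]$; taking $L^\infty$ of the latter and $L^0$ of the former, with the product-of-entries bound as above, yields \eqref{cof-bound2} with exponent $n-2 = 1$.

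Concretely, the cleanest uniform route is: apply the mean value theorem to $g(t) = \cof(t\eta + (1-t)\tau)$ entrywise, so $\cof\eta - \cof\tau = g(1) - g(0) = g'(t) = D(\cof)(t\eta+(1-t)\tau)(\eta - \tau)$ for some $t \in [0,1]$; then observe that $M \mapsto D(\cof)(M)$ has entries that are homogeneous of degree $n-2$ in the entries of $M$ (constant, i.e. degree $0$, for $n=2$; linear for $n=3$), so $|D(\cof)(M)(\eta-\tau)| \le C\,|M|_\infty^{\,n-2}\,|\eta-\tau|_\infty$ pointwise; then integrate and pull out $\| \, t\eta + (1-t)\tau \, \|_\infty^{\,n-2}$. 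This simultaneously handles both $n$ and makes the exponent $n-2$ transparent.

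The only mildly delicate point — and the place I'd be most careful — is the case $n=2$: one must not accidentally write a factor $\|t\eta + (1-t)\tau\|_\infty^{-1}$ or a genuinely $t$-dependent constant; the exponent is $0$ so the prefactor is identically $1$ and the constant $C$ is purely dimensional. Everything else is bookkeeping: counting the number of monomial terms in a cofactor entry (at most $n$ for the relevant minors and at most $(n-1)!$ in general, absorbed into $C$), and using equivalence of the $\ell^\infty$ and Frobenius norms on $n\times n$ matrices to pass between $|\cdot|_\infty$ and $:$ in the statement. No smoothness of $\eta,\tau$ beyond membership in the relevant $L^\infty$ and $L^2$ spaces is needed, since the argument is pointwise in the integrand before integrating.
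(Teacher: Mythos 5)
Your proposal is correct and follows essentially the same route as the paper: direct entrywise/pointwise computation for \eqref{det-bound}, linearity of $\cof$ for $n=2$, and the mean value theorem applied to the $2\times2$ minors (equivalently, to $t\mapsto\cof(t\eta+(1-t)\tau)$) for $n=3$. Your unified phrasing via the degree-$(n-2)$ homogeneity of $D(\cof)$ is just a tidy repackaging of the paper's case-by-case argument, not a different proof.
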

\begin{proof}
The bound \eqref{det-bound} is  given by a direct computation.
For $n=2$, we have $\cof(\eta) - \cof(\tau)=  \cof(\eta-\tau)$ from which the result follows. For $n=3$ we use the mean value theorem. It is enough to estimate the first entry of 
$\cof(\eta) - \cof(\tau)$ which is equal to
\begin{align*}
\det \begin{pmatrix} \eta_{22} & \eta_{23} \\ \eta_{32} & \eta_{33} \end{pmatrix} - \det \begin{pmatrix} \tau_{22} & \tau_{23} \\ \tau_{32} & \tau_{33} \end{pmatrix} & =
\cof\bigg( t \begin{pmatrix} \eta_{22} & \eta_{23} \\ \eta_{32} & \eta_{33} \end{pmatrix}  +(1-t) \begin{pmatrix} \tau_{22} & \tau_{23} \\ \tau_{32} & \tau_{33} \end{pmatrix} \bigg): \\
& \begin{pmatrix} \eta_{22} - \tau_{22} &\eta_{23} - \tau_{23} \\\eta_{32} - \tau_{32} & \eta_{33} - \tau_{33} \end{pmatrix}, 
\end{align*}
for some $t \in [0,1]$. Direct computation then give 
\eqref{cof-bound2}. 
\end{proof}

\subsection{Assumptions on the approximation spaces}
For the discretization \eqref{m1h}, one can use either the Lagrange finite element spaces or certain finite dimensional spaces of $C^1$ functions. To make our results applicable to other types of discretizations, we formulate our assumptions on the approximation spaces.

\begin{ass} \label{opt-app} Approximation property. The finite dimensional space $V_h \subset H^1(\Omega)$ contains the Lagrange space of degree $d$ 
$$
\{ \, v_h \in C^0(\tir{\Omega}), v_h|_K \in \mathbb{P}_d, \forall K \in \mathcal{T}_h\, \},
$$
and there exists a linear interpolation operator
$I_h$ mapping $C^r(\tir{\Omega})$ for $r=0$ or $r=1$ into $V_h$ and a constant $C$
such that if $w$ is in the Sobolev space $W^{l+1,p}(\Omega), 1 \leq p \leq \infty$, $0 \leq l \leq d$
\begin{equation}
|| w -I_h w ||_{k,p,\mathcal{T}_h} \leq C_{ap} h^{l+1-k} |w|_{l+1,p}, \label{schum}
\end{equation}
for $k=0, 1, 2$. 
\end{ass}
The interpolant $g_h$ used in \eqref{m1h} is taken as $I_h$ applied to a continuous extension of $g$.

When $V_h$ is the Lagrange finite element space, the interpolant  $I_h$ is taken as the standard interpolation operator defined from the degrees of freedom. 
It is then known that Assumption \ref{opt-app} holds \cite{Brenner02}.

As a consequence of \eqref{schum},
\begin{equation}
||I_h w||_{k,p,\mathcal{T}_h} \leq (1+C_{ap}) ||w||_{k,p},   w \in W^{k,p}(\Omega),  k=0,1, 2, \label{stable}
\end{equation}
for all $p$. 

\begin{ass} \label{inverse} Inverse estimates
\begin{equation}
||w_h||_{t, p,\mathcal{T}_h} \leq C_{inv} h^{s-t+\min(0,\frac{n}{p}-\frac{n}{q})} ||w_h||_{s,q,\mathcal{T}_h},  \label{inverse}
\end{equation}
for $0 \leq s \leq t, 1 \leq p,q\leq \infty$ and  $w_h \in V_h$.
\end{ass}
The inverse estimates hold for the Lagrange finite element spaces as a consequence of the quasi-uniformity assumption on the triangulation \cite{Brenner02}.


\subsection{Assumptions on the exact solution}

Let $\lambda_1(A)$ and $\lambda_n(A)$ denote the smallest and largest eigenvalues of a symmetric matrix $A$. We make the following assumption on the exact solution:

\begin{ass} \label{psc} 
Local piecewise smooth and strict convexity assumption. The solution $u$ of \eqref{m1} is in $W^{3,\infty}(\mathcal{T}_h) \cap H^1(\Omega)$, strictly convex on each element $T$ and for constants $m', M' >0$, independent of $h$
$$
m' \leq \lambda_1(D^2 u(x)) \leq \lambda_n(D^2 u(x))  \leq M', \forall x \in K, K \in \mathcal{T}_h.
$$
Moreover, we require the exact solution $u$ to solve the problem: find $u \in W^{2,\infty}(\mathcal{T}_h)$, strictly convex on each element $T$, such that $u=g \ \text{on} \ \partial \Omega$ and 
\begin{align} \label{m2}
\sum_{K \in \mathcal{T}_h } \int_{K}  (\det D^2 u) v \ud x =
\int_{\Omega} f v \ud x, \forall v \in V_h \cap H^1_0(\Omega).
\end{align}
\end{ass}
We note that Assumption \ref{psc}  trivially holds for a strictly convex solution $u$ in $C^{3}(\tir{\Omega})$. 
In that case $ f \geq c_0>0$ for a constant $c_0$. The main motivation of Assumption \ref{psc} is our 
claim in \cite{Awanou-Std04} that the numerical approximation of the viscosity solution of \eqref{m1}  can be reduced to solving problems of this type. 

\section{Well-posedness of the discrete problem and error estimates} \label{posedness}
The proof of all lemmas in this section are given at the end of the section.

We first state a fundamental observation about the behavior of discrete functions near the interpolant $I_h u$.

\begin{lemma} \label{lem0}
There exists $\delta > 0$ such that for $h$ sufficiently small and for all $v_h \in V_h$ with $||v_h- I_h u||_1 < \delta/(2 C_{inv}) h^{1+n/2}$, $D^2 (v_h|_K)$ is positive definite with 
$$\frac{m'}{2} \leq \lambda_1D^2 (v_h|_K) \leq \lambda_nD^2 (v_h|_K) \leq \frac{3 M'}{2}$$ 
where $m'$ and $M'$ are the constants of Assumption \ref{psc}. Thus $\cof D^2 (v_h|_K)$ is invertible on each element $T$. 
\end{lemma}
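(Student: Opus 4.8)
The plan is to show that on every element $K$ the piecewise Hessian $D^2(v_h|_K)$ is uniformly $L^\infty$-close to $D^2 u$, and then to read off the eigenvalue bounds from the Lipschitz dependence of the eigenvalues of a symmetric matrix on its entries. Fix once and for all $\delta := \min(m',M')/(2n)$, which depends only on $n$ and on the constants of Assumption \ref{psc}. Since $v_h - I_h u \in H^1(\Omega)$, its broken $H^1$-norm equals $||v_h - I_h u||_1$, so the inverse estimate \eqref{inverse} applied with $t=2$, $s=1$, $p=\infty$, $q=2$ (for which the exponent of $h$ is $1-2+\min(0,-n/2)=-1-n/2$) gives
$$
||D^2 v_h - D^2 I_h u||_{0,\infty,\mathcal{T}_h}\ \le\ ||v_h - I_h u||_{2,\infty,\mathcal{T}_h}\ \le\ C_{inv}\,h^{-1-n/2}\,||v_h - I_h u||_1\ <\ \frac{\delta}{2},
$$
the last step being exactly why the hypothesis carries the factor $h^{1+n/2}$.

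Next, since $I_h$ is defined element by element and $u|_K \in W^{3,\infty}(K)$, the approximation property \eqref{schum} applied on each $K$ with $k=2$, $l=2$ (allowed because $d\ge 2$) and $p=\infty$ yields $||D^2 I_h u - D^2 u||_{0,\infty,\mathcal{T}_h} \le C_{ap}\,h\,||u||_{3,\infty,\mathcal{T}_h}$, which is $\le \delta/2$ for $h$ below some $h_0>0$ depending only on the data, because $||u||_{3,\infty,\mathcal{T}_h}<\infty$ and is $h$-independent by Assumption \ref{psc}. Combining with the previous display and the triangle inequality, $||D^2 v_h(x) - D^2 u(x)||_\infty \le \delta$ for every $x\in K$, $K\in\mathcal{T}_h$. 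For symmetric matrices $A,B$ one has $|\lambda_i(A)-\lambda_i(B)| \le ||A-B||_2 \le n\,||A-B||_\infty$ by Weyl's perturbation inequality together with $||\cdot||_2\le||\cdot||_F\le n\,||\cdot||_\infty$; hence $|\lambda_i(D^2 v_h(x))-\lambda_i(D^2 u(x))| \le n\delta \le \tfrac12\min(m',M')$, and Assumption \ref{psc} then forces $\lambda_1 D^2(v_h|_K) \ge m'-\tfrac{m'}{2}=\tfrac{m'}{2}$ and $\lambda_n D^2(v_h|_K)\le M'+\tfrac{M'}{2}=\tfrac{3M'}{2}$ on $K$. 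In particular $D^2(v_h|_K)$ is positive definite, so $\det D^2(v_h|_K)\ge (m'/2)^n>0$, and since $\det(\cof A)=(\det A)^{n-1}$ for $n=2,3$ the matrix $\cof D^2(v_h|_K)$ is invertible on each $K$.

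The argument is essentially routine bookkeeping; the points that require attention are matching the power of $h$ in the inverse estimate so that the blow-up $h^{-1-n/2}$ is cancelled exactly by the factor $h^{1+n/2}$ in the hypothesis, using the interpolation estimate for $D^2 I_h u$ on the broken space (legitimate because $I_h$ is local and $u$ is only piecewise $W^{3,\infty}$), and respecting the order of quantifiers: $\delta$ is pinned down first from $m'$, $M'$, $n$, and only then is $h$ taken small enough to absorb the interpolation error.
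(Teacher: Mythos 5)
Your proof is correct and follows the same skeleton as the paper's: split $v_h - u$ into $(v_h - I_h u) + (I_h u - u)$, apply the inverse estimate with exponent $-1-n/2$ to cancel the $h^{1+n/2}$ in the hypothesis, apply the approximation property to the interpolation error, and transfer the resulting $L^\infty$ closeness of Hessians to closeness of eigenvalues. The one place you genuinely sharpen the paper's argument is the eigenvalue step: the paper invokes abstract continuity of eigenvalues of symmetric matrices as a function of the entries (which only guarantees the existence of some $\delta$, and a priori a pointwise rather than uniform modulus), whereas you use Weyl's perturbation inequality $|\lambda_i(A)-\lambda_i(B)|\le\|A-B\|_2$ together with $\|\cdot\|_2\le\|\cdot\|_F\le n\|\cdot\|_\infty$, which gives a uniform Lipschitz bound and lets you write down $\delta=\min(m',M')/(2n)$ explicitly. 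You also take $l=2$ in \eqref{schum} rather than $l=d$, which is more faithful to the $W^{3,\infty}(\mathcal{T}_h)$ regularity actually assumed in Assumption \ref{psc}; the paper writes $|u|_{d+1,\infty}$ at the analogous step. Both refinements are improvements in rigor, not a different route.
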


Let
\begin{equation} \label{delta-h}
\delta_h = \frac{\delta}{2 C_{inv}} h^{1+\frac{n}{2}}.
\end{equation}
By Lemma \ref{lem0}, for $v_h \in V_h, ||v_h- I_h u||_1 \leq \delta_h$, $v_h$ is piecewise strictly convex with smallest eigenvalue bounded below by $m'/2$ and above by $3M'/2$. Put
$$X_h= \{ \, v_h \in V_h, v_h=g_h \, \text{on} \, \partial \Omega, ||v_h-I_h u||_1 < \delta_h \, \}.$$

As a consequence of Assumption \ref{psc}
\begin{lemma} \label{lem-1}
There exists constants $m, M >0$ independent of $h$ such that for all $v_h \in X_h$
$$
m \leq \lambda_1(\cof D^2 v_h(x)) \leq \lambda_n(\cof D^2 v_h(x))  \leq M, \forall x \in K, K \in \mathcal{T}_h.
$$ 
It follows that
\begin{equation} \label{p-d-K}
m |w|_{1,K}^2 \leq \int_{K} [(\cof\, D^2 v_h(x)) D w(x)] \cdot D w(x) \, \ud x \leq  M |w|_{1,K}^2, w \in H^1(K). 
\end{equation}
\end{lemma}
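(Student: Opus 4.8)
The plan is to obtain Lemma~\ref{lem-1} essentially as a corollary of Lemma~\ref{lem0}. Every $v_h \in X_h$ satisfies $\|v_h - I_h u\|_1 < \delta_h = \delta/(2C_{inv})\, h^{1+n/2}$, so Lemma~\ref{lem0} applies and yields, on each element $K$, that $A(x):=D^2(v_h|_K)(x)$ is symmetric positive definite with eigenvalues $\mu_1(x)\le\cdots\le\mu_n(x)$ obeying $m'/2 \le \mu_i(x) \le 3M'/2$ for all $x\in K$. The task is then purely to transfer this two-sided spectral bound from $D^2 v_h$ to $\cof D^2 v_h$.

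For that I would use the elementary identity $\cof A = (\det A)\,A^{-1}$, valid for any invertible symmetric matrix $A$ (the cofactor matrix of a symmetric matrix is symmetric, hence equals the adjugate). Thus $\cof A(x)$ is itself symmetric positive definite, with eigenvalues $\det A(x)/\mu_i(x) = \prod_{j\neq i}\mu_j(x)$ for $i=1,\dots,n$; since $n\in\{2,3\}$, each such eigenvalue is a product of exactly $n-1$ of the $\mu_j(x)$, so the smallest one is $\prod_{j=1}^{n-1}\mu_j(x) \ge (m'/2)^{n-1}$ and the largest is $\prod_{j=2}^{n}\mu_j(x) \le (3M'/2)^{n-1}$. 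Setting $m=(m'/2)^{n-1}$ and $M=(3M'/2)^{n-1}$ — constants depending only on $n$, $m'$ and $M'$, hence independent of $h$ — gives $m \le \lambda_1(\cof D^2 v_h(x)) \le \lambda_n(\cof D^2 v_h(x)) \le M$ for all $x\in K$ and all $K\in\mathcal{T}_h$.

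For the inequality \eqref{p-d-K}, since $\cof D^2 v_h(x)$ is symmetric with spectrum in $[m,M]$, the Rayleigh-quotient bound gives $m|\xi|^2 \le (\cof D^2 v_h(x)\,\xi)\cdot\xi \le M|\xi|^2$ for every $\xi\in\mathbb{R}^n$ and a.e.\ $x\in K$. Taking $\xi = Dw(x)$ for $w\in H^1(K)$ and integrating over $K$ yields \eqref{p-d-K}, using $\int_K |Dw(x)|^2\ud x = |w|_{1,K}^2$.

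I do not expect a genuine obstacle here: the whole content sits in Lemma~\ref{lem0}, and what remains is bookkeeping. The only points deserving a word of care are that $v_h|_K$ is a polynomial, so $D^2(v_h|_K)$ is honestly symmetric (and $\cof D^2 v_h$ continuous), which justifies applying the spectral theorem to both it and its cofactor matrix; and that the exponents $n-1$ appearing on $m'/2$ and $3M'/2$ are harmless precisely because $n$ is fixed at $2$ or $3$, so $m$ and $M$ remain fixed positive constants.
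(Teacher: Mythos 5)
Your proof is correct and follows essentially the same route as the paper: invoke Lemma~\ref{lem0} for a two-sided eigenvalue bound on $D^2(v_h|_K)$, transfer it to $\cof D^2 v_h$ via $\cof A = (\det A)A^{-1}$ for symmetric invertible $A$, and then use the Rayleigh-quotient characterization to obtain \eqref{p-d-K}. The only cosmetic difference is that you read the eigenvalues of $\cof A$ as products of $n-1$ of the $\mu_j$, giving the sharper constants $m=(m'/2)^{n-1}$, $M=(3M'/2)^{n-1}$, whereas the paper bounds $\det A/\lambda_i$ by $\det A$-estimates to get $(m')^n/M'$ and $(M')^n/m'$ (and is a bit loose about the factors $1/2$ and $3/2$ from Lemma~\ref{lem0}); both are valid.
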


The main result of this section is the following theorem
\begin{thm} \label{errorest}
Let the finite dimensional spaces $V_h \subset H^1(\Omega)$ contain piecewise polynomials of degree $d \geq 2$. Assume that the spaces $V_h$ satisfy Assumption \ref{opt-app} 
of approximation property and Assumption \ref{inverse} of inverse estimates.
Assume also that the exact solution $u \in W^{l+1,\infty}(\mathcal{T}_h) \cap H^1(\Omega), 2\leq l \leq d$ satisfies Assumption \ref{psc} of strict convexity and solves \eqref{m2}. Then the problem \eqref{m1h} has a unique solution $u_h$ which is strictly convex on each element  and we have the error estimates
\begin{align*}
  ||u-u_h||_{2,\mathcal{T}_h } &\leq C h^{l-1}\\
  ||u-u_h||_1 &\leq C h^{l},
\end{align*} 
for $h$ sufficiently small. Moreover, with a sufficiently close initial guess $u_h^0$, the sequence defined by, $u_h^{k+1}=g_h$ on $\partial \Omega$
\begin{align}  \label{time-marching}
\begin{split}
 \frac{\nu}{\alpha^{n-1}} \int_{\Omega} D u_h^{k+1} \cdot D v_h \ud x & = \frac{\nu}{\alpha^{n-1}}  \int_{\Omega} D u_h^{k} \cdot D v_h \ud x 
  - \int_{\Omega} f v_h \ud x \\
  & \quad  \qquad \qquad \qquad +\sum_{K \in \mathcal{T}_h } \int_{K}  (\det D^2 u_h^k) v_h \ud x, 
\end{split}
\end{align}
$\forall v_h \in V_h \cap H_0^1(\Omega)$, converges linearly to $u_h$ in the $H^1$ norm for $\nu=(M+m)/2, \alpha = h^3$ and for $h$ sufficiently small.
\end{thm}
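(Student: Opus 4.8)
The plan is to realize the discrete equation \eqref{m1h} as a fixed point of the linear solver that underlies the time marching scheme \eqref{time-marching}, and to show this solver is a contraction of $\overline{X_h}$ (the closure of the set $X_h$) into itself. For $w_h\in\overline{X_h}$ let $\Psi(w_h)\in V_h$ be the unique function with $\Psi(w_h)=g_h$ on $\partial\Omega$ and
\begin{align*}
\frac{\nu}{\alpha^{n-1}}\int_\Omega D\Psi(w_h)\cdot Dv_h\ud x
&=\frac{\nu}{\alpha^{n-1}}\int_\Omega Dw_h\cdot Dv_h\ud x-\int_\Omega f v_h\ud x\\
&\quad+\sum_{K\in\mathcal{T}_h}\int_K(\det D^2 w_h)v_h\ud x
\end{align*}
for all $v_h\in V_h\cap H^1_0(\Omega)$; this is well posed by Lax--Milgram since $\overline{X_h}\neq\emptyset$ for $h$ small by Lemma~\ref{lem0}. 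Then $u_h\in\overline{X_h}$ solves \eqref{m1h} if and only if $\Psi(u_h)=u_h$, while \eqref{time-marching} is precisely the iteration $u_h^{k+1}=\Psi(u_h^k)$. Hence a Banach fixed point argument on $\overline{X_h}$ would yield at once the existence and uniqueness of $u_h$, its elementwise strict convexity (via Lemma~\ref{lem0}), and the linear convergence of \eqref{time-marching}. The two things to prove are a consistency bound for $\Psi(I_h u)$ and a contraction estimate for $\Psi$ on $\overline{X_h}$.

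For consistency, since $I_h u\in X_h$ and $\Psi(I_h u)-I_h u\in V_h\cap H^1_0(\Omega)$, testing the equation for $\Psi(I_h u)-I_h u$ against itself and using that $u$ solves \eqref{m2} gives $\frac{\nu}{\alpha^{n-1}}|\Psi(I_h u)-I_h u|_1^2=\sum_{K}\int_K(\det D^2 I_h u-\det D^2 u)(\Psi(I_h u)-I_h u)\ud x$. Using Lemma~\ref{MeanV}, Lemma~\ref{cofbound} (the cofactor of the segment between $D^2 I_h u$ and $D^2 u$ is bounded in $L^\infty$ by Assumption~\ref{psc} and $I_h u\in X_h$), the approximation property \eqref{schum} with $k=2$, and Poincaré's inequality, one obtains $|\Psi(I_h u)-I_h u|_1\le C\alpha^{n-1}h^{l-1}=Ch^{3(n-1)+l-1}$ for $\alpha=h^3$; an element-wise integration by parts as in the next paragraph should sharpen the Hessian-error contribution here to order $h^l$.

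The contraction estimate is the heart of the matter. For $w_h,w_h'\in\overline{X_h}$ put $r=w_h-w_h'$, $\zeta_h=\Psi(w_h)-\Psi(w_h')$ and $\phi_h=\zeta_h-r$, all in $V_h\cap H^1_0(\Omega)$; subtracting the defining relations, $\phi_h$ solves $\frac{\nu}{\alpha^{n-1}}\int_\Omega D\phi_h\cdot Dv_h\ud x=\sum_K\int_K(\det D^2 w_h-\det D^2 w_h')v_h\ud x$. The key structural observation is to use the integral form of the mean value identity of Lemma~\ref{MeanV}: $\det D^2 w_h-\det D^2 w_h'=\overline C:D^2 r$ with $\overline C=\int_0^1\cof(sD^2 w_h+(1-s)D^2 w_h')\ud s$, which by Lemma~\ref{lem-1} is uniformly positive definite with eigenvalues in $[m,M]$ and, crucially, is divergence free on each element, being an average of cofactors of Hessians of polynomials. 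Therefore $\overline C:D^2 r=\div(\overline C Dr)$ on each $K$, and testing the $\phi_h$--equation with $\phi_h$ and with $r$, integrating by parts element-by-element and invoking the coercivity \eqref{p-d-K}, gives $|\phi_h|_1\le Ch^{3(n-1)}|r|_1$ and $\int_\Omega Dr\cdot D\phi_h\ud x\le-\frac{\alpha^{n-1}}{\nu}m|r|_1^2+\text{(face terms)}$. Expanding $|\zeta_h|_1^2=|r|_1^2+2\int_\Omega Dr\cdot D\phi_h\ud x+|\phi_h|_1^2$ then yields $|\zeta_h|_1\le(1-\epsilon_h)|r|_1$ with $\epsilon_h\sim h^{3(n-1)}>0$, \emph{provided} the interelement remainders (integrals over interior faces of $\phi_h$, resp. $r$, against the jump of $\overline C Dr$) can be absorbed. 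Absorbing these without adding any jump stabilization to the scheme is, I expect, the main obstacle; it is exactly where the choice $\alpha=h^3$ enters, making $\alpha^{n-1}/\nu$ small enough that these face terms, bounded by trace and inverse inequalities (Assumption~\ref{inverse}), are dominated, and it is where the rescaling of the equation is convenient: since $(u,f,g)\mapsto(\gamma u,\gamma^n f,\gamma g)$ leaves \eqref{m1h} and \eqref{time-marching} invariant after division by $\gamma^n$, one may assume $M'$, and hence $||D^2 u||_\infty$ and $||\cof D^2 u||_\infty$, as small as needed for the constants to close with $\nu=(M+m)/2$.

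Finally, combining the two estimates, $|\Psi(w_h)-I_h u|_1\le(1-\epsilon_h)\delta_h+Ch^{3(n-1)+l-1}\le\delta_h$ for $h$ small when $l\ge2$ and $n=2,3$, so $\Psi(\overline{X_h})\subseteq\overline{X_h}$ with $\delta_h$ as in \eqref{delta-h}; Banach's theorem then produces the unique $u_h\in\overline{X_h}$ with $\Psi(u_h)=u_h$, strictly convex on each element, and iterates $u_h^{k+1}=\Psi(u_h^k)$ converging to $u_h$ at rate $1-\epsilon_h<1$ from any sufficiently close $u_h^0$. For the error estimates, $u_h=\Psi(u_h)$ gives $|u_h-I_h u|_1\le\epsilon_h^{-1}|\Psi(I_h u)-I_h u|_1$, which with the sharpened consistency bound and $\epsilon_h\sim h^{3(n-1)}$ is $\le Ch^l$; hence $||u-u_h||_1\le||u-I_h u||_1+|u_h-I_h u|_1\le Ch^l$ by \eqref{schum}, and $||u-u_h||_{2,\mathcal{T}_h}\le||u-I_h u||_{2,\mathcal{T}_h}+||I_h u-u_h||_{2,\mathcal{T}_h}\le Ch^{l-1}+C_{inv}h^{-1}|u_h-I_h u|_1\le Ch^{l-1}$ by \eqref{schum} and the inverse estimate.
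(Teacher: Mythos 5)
Your overall strategy matches the paper's: set up the linear Poisson solver underlying \eqref{time-marching} as a fixed-point map, show it is a contraction on a small ball around $I_h u$, and invoke the Banach fixed point theorem to get existence, uniqueness, elementwise convexity (via Lemma~\ref{lem0}), the error estimates, and the linear convergence of \eqref{time-marching}. (In the paper's notation, $T(\alpha v_h)=\alpha\Psi(v_h)$, so the two maps are the same up to the $\alpha$-rescaling.) The consistency step is also essentially the paper's Lemma~\ref{lem2}.

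Where you depart from the paper---and where the gap is---is the contraction argument. You expand $|\zeta_h|_1^2=|r|_1^2+2\int_\Omega Dr\cdot D\phi_h+|\phi_h|_1^2$ and seek a negative contribution $-\tfrac{\alpha^{n-1}}{\nu}m|r|_1^2$ from the cross term, obtaining a degenerate contraction constant $1-\epsilon_h$ with $\epsilon_h\sim\alpha^{n-1}$. This cannot close. After integrating $\overline C:D^2r=\div(\overline C Dr)$ by parts element-by-element, the interior-face remainder $\tfrac{\alpha^{n-1}}{\nu}\sum_K\int_{\partial K}r\,(\overline C Dr)\cdot n_K$ is, by trace and inverse estimates, of size $\tfrac{\alpha^{n-1}}{\nu}\,C M h^{-1}|r|_1^2$ (using Poincar\'e for $r\in H^1_0$). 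This is a factor $h^{-1}$ \emph{larger} than the good term $-\tfrac{\alpha^{n-1}}{\nu}m|r|_1^2$. Taking $\alpha=h^3$ does not help here because both terms carry the same power $\alpha^{n-1}$; nor does rescaling $u\mapsto\gamma u$, which multiplies $m$ and $M$ by $\gamma^{n-1}$ and leaves the ratio $M/m$ (hence the size of the face term relative to the good term) unchanged. So the proposed expansion fails to give any $|\zeta_h|_1\le(1-\epsilon_h)|r|_1$. Your fallback---sharpening the consistency bound from $\alpha^{n-1}h^{l-1}$ to $\alpha^{n-1}h^{l}$ by the same divergence-form integration by parts---runs into exactly the same wall: the resulting face integral $\sum_K\int_{\partial K}w_h(\overline C D(I_h u-u))\cdot n_K$ scales as $h^{l-1}\|w_h\|_0$, the same order as before, because $D(I_h u-u)$ only gains half a power on faces.

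The paper's Lemma~\ref{lem3} avoids all of this by a structurally different estimate. Instead of squaring and looking for a small negative cross term, it bounds the Fr\'echet derivative $T_K'$ directly through the observation that, after the element-wise integration by parts, the volume contribution takes the form $\alpha\int_K\bigl[(I-\tfrac{1}{\nu}\cof D^2(\alpha v_h))Dw_h\bigr]\cdot Dz_h$, and the matrix $I-\tfrac{1}{\nu}\cof D^2(\alpha v_h)$ has eigenvalues in $[1-M/\nu,\,1-m/\nu]=[-\beta,\beta]$ with $\beta=(M-m)/(M+m)<1$, precisely because $\nu=(M+m)/2$. This gives a contraction constant $\beta$ that is bounded away from $1$ \emph{independently of $\alpha$ and $h$}. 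The face term then appears with the extra factor $\alpha^{n}/\nu$ relative to the leading $\alpha\beta$ term, i.e.\ its relative size is $\alpha^{n-1}h^{-(1+n/2)(n-1)}=h^{(2-n/2)(n-1)}=O(h)$ for $\alpha=h^3$ and $n=2,3$. Hence $a=\beta+O(h^{1/2})<1$ and the contraction holds with a uniform constant. With this uniform $a$, the paper's consistency bound $\alpha^n h^{l-1}$ already yields $\|I_h u-u_h\|_1\le\rho\sim\alpha^{n-1}h^{l-1}=h^{3(n-1)+l-1}\ll h^l$, and the claimed error estimates follow from the triangle inequality and the inverse estimate, with no sharpening of the consistency needed. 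To repair your proof, replace the $|r+\phi_h|_1^2$ expansion by a direct bound on $\Psi'(w_h)$ (equivalently on $T_K'$ as in the paper), using the operator-norm bound $\|I-\tfrac1\nu\cof D^2(\alpha v_h)\|\le\beta$ to get the uniform contraction constant, and drop the purported sharpening of Lemma~\ref{lem2}.
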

Before we give the proof of the above theorem we will state several lemmas whose proof are given at the end of the section. 

We recall that $\alpha >0$ is a small parameter which may depend on $h$.  For $\rho >0$, let
$$
B_h(\rho) = \{v_h \in V_h, v_h= g_h \, \text{on} \, \partial \Omega,  ||v_h- I_h u||_1 \leq \rho \},
$$
where we do not indicate the dependence of $B_h(\rho)$ on $\alpha$ for simplicity.
The ball $B_h(\rho)$ is nonempty as it contains $ I_h u$.

If $v_h \in B_h(\rho), ||\alpha v_h  -\alpha  I_h u||_1 \leq \alpha \rho$. Thus
\begin{align} \label{ball-inclusion}
\alpha B_h(\rho) \subset X_h \, \text{for} \, \alpha \rho \leq \frac{\delta_h}{2} \, \text{which holds for } \, \alpha = h^{3} \, \text{and} \, h \, \text{small enough}.
\end{align}

For a given $v_h \in V_h$, $v_h= g_h$ on $\partial \Omega$, define $T(\alpha v_h) \in V_h$ as the solution of 
\begin{align} \label{operatorB}
\begin{split}
\nu \int_{\Omega} D T(\alpha v_h) \cdot D w_h \ud x & = \nu  \int_{\Omega} D (\alpha v_h) \cdot D w_h \ud x \\
& \qquad   + \alpha^n \sum_{K \in \mathcal{T}_h } \int_{K}  (\det D^2 v_h) w_h \ud x \\
& \qquad \qquad  - \alpha^n \int_{\Omega} f w_h  \ud x, \forall w_h \in V_h \cap H_0^1(\Omega),
\end{split}
\end{align}
with $\alpha v_h-T(\alpha v_h) = 0$ on $\partial \Omega$ and we recall that $\nu=(M+m)/2$ where $M$ and  $m$ are the constants of Lemma \ref{lem-1}.

We will show that $T$ has a unique fixed point $\alpha u_h$ with $u_h$ in $B_h(\rho)$ for $h$ sufficiently small.

The motivation to introduce the damping parameter $\alpha$ is that it allows to solve a rescaled version of \eqref{m1}. Indeed $\det D^2 u=f$ is equivalent to $\det \alpha D^2 u= \alpha^n  f$. Taking $\alpha$ as a power of $h$ will play a crucial role in proving the well-posedness of \eqref{m1h}  and 
obtaining optimal error estimates. 

\begin{lemma} \label{lem1}
The mapping $T$ is well defined and if $\alpha u_h$ is a fixed point of $T$, i.e. $T(\alpha u_h)= \alpha u_h$, then $u_h$ solves \eqref{m1h}.
\end{lemma}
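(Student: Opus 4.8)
The plan is to handle the two claims --- that $T$ is well defined, and that fixed points of $T$ yield solutions of \eqref{m1h} --- essentially independently, both being elementary once the definition \eqref{operatorB} is unpacked.

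For well-definedness, I would fix $v_h \in V_h$ with $v_h = g_h$ on $\partial\Omega$ and observe that the prescribed trace of $T(\alpha v_h)$ is $\alpha v_h = \alpha g_h$ on $\partial\Omega$; the natural thing is therefore to seek $T(\alpha v_h)$ in the form $\alpha v_h + z_h$ with the correction $z_h \in V_h \cap H^1_0(\Omega)$. Substituting into \eqref{operatorB}, the term $\nu \int_\Omega D(\alpha v_h) \cdot D w_h \ud x$ cancels on both sides and one is left with
\[
\nu \int_\Omega D z_h \cdot D w_h \ud x = \alpha^n \sum_{K \in \mathcal{T}_h} \int_K (\det D^2 v_h)\, w_h \ud x - \alpha^n \int_\Omega f w_h \ud x, \quad \forall w_h \in V_h \cap H^1_0(\Omega).
\]
Here the left-hand side is the symmetric bilinear form $(z_h, w_h) \mapsto \nu \int_\Omega D z_h \cdot D w_h \ud x$, which --- since $\nu = (M+m)/2 > 0$ by Lemma \ref{lem-1} --- is coercive on $V_h \cap H^1_0(\Omega)$ by the Poincar\'e inequality, while the right-hand side is a bounded linear functional because $D^2 v_h$ is computed element by element, so each $\det D^2 v_h$ restricted to $K$ is a polynomial (hence in $L^\infty(K)$) and $f \in L^1(\Omega)$. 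The Lax--Milgram lemma (or simply the invertibility of a symmetric positive definite matrix, $V_h \cap H^1_0(\Omega)$ being finite dimensional) then gives a unique $z_h$, so that $T(\alpha v_h)$ is uniquely defined.

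For the fixed-point claim, I would take $v_h = u_h$ in \eqref{operatorB} and use $T(\alpha u_h) = \alpha u_h$, so that the two occurrences of $\nu \int_\Omega D(\alpha u_h) \cdot D w_h \ud x$ cancel, leaving
\[
0 = \alpha^n \Big( \sum_{K \in \mathcal{T}_h} \int_K (\det D^2 u_h)\, w_h \ud x - \int_\Omega f w_h \ud x \Big), \quad \forall w_h \in V_h \cap H^1_0(\Omega).
\]
Since $\alpha > 0$, dividing by $\alpha^n$ yields exactly the equation in \eqref{m1h}. The boundary condition $u_h = g_h$ on $\partial\Omega$ follows from the trace condition built into the definition of $T$: $\alpha u_h - T(\alpha u_h) = 0$ on $\partial\Omega$ gives $\alpha u_h = \alpha g_h$ on $\partial\Omega$, and one divides by $\alpha$.

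I do not anticipate a genuine obstacle in this lemma; the only things worth being careful about are the correct trace ($\alpha g_h$, not $g_h$) for $T(\alpha v_h)$, and the observation that the piecewise determinant term defines a bounded functional so that Lax--Milgram applies. The real work of the section --- producing a fixed point of $T$ inside $B_h(\rho)$ through the contraction/rescaling argument, and extracting the error estimates --- is carried out in the lemmas that follow.
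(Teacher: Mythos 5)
Your proof is correct and takes essentially the same approach as the paper: well-definedness via the Lax--Milgram lemma (which you helpfully spell out by reducing to a problem for the correction $z_h \in V_h \cap H^1_0(\Omega)$), and the fixed-point claim via direct substitution and cancellation of the $\nu \int_\Omega D(\alpha u_h)\cdot D w_h \ud x$ terms, then division by $\alpha^n$.
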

The next lemma says that the mapping $T$ does not move the center $I_h u$ of a ball $B_h(\rho)$ too far.

\begin{lemma} \label{lem2}
We have
\begin{equation}
||\alpha  I_h u -T(\alpha I_h u) ||_1 \leq  C_1 \alpha^n h^{l-1}  ||u ||_{2,\infty}^{n-1} || u||_{l+1}, \label{ballcenter}
\end{equation}
\end{lemma}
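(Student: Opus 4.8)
The plan is to reduce \eqref{ballcenter} to a coercive energy estimate for the error $e_h := \alpha I_h u - T(\alpha I_h u)$. By the definition of $T$ immediately after \eqref{operatorB}, $e_h = 0$ on $\partial\Omega$ and $e_h \in V_h$, so $e_h \in V_h\cap H^1_0(\Omega)$ is an admissible test function. First I would write \eqref{operatorB} with $v_h = I_h u$, move the $T$-term to the left so that the two $\nu$-integrals combine into $\nu\int_\Omega De_h\cdot Dw_h\ud x$, and use that the exact solution $u$ solves \eqref{m2} to replace $\int_\Omega f w_h\ud x$ by $\sum_{K\in\mathcal{T}_h}\int_K (\det D^2 u) w_h\ud x$. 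This yields
$$\nu \int_\Omega D e_h \cdot D w_h \ud x = \alpha^n \sum_{K\in\mathcal{T}_h} \int_K \big(\det D^2 u - \det D^2 I_h u\big) w_h \ud x, \quad \forall\, w_h\in V_h\cap H^1_0(\Omega).$$
Taking $w_h = e_h$ and recalling that $\nu = (M+m)/2 > 0$ is a fixed constant by Lemma \ref{lem-1} gives $\nu|e_h|_1^2 = \alpha^n\sum_{K}\int_K(\det D^2 u - \det D^2 I_h u)e_h\ud x$.

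Next I would bound the determinant difference elementwise. By Lemma \ref{MeanV}, on each $K$ there is $t\in[0,1]$ with $\det D^2 u - \det D^2 I_h u = \cof\big(tD^2u + (1-t)D^2 I_h u\big):D^2(u - I_h u)$, and applying \eqref{det-bound} of Lemma \ref{cofbound} on $K$ with $\eta = tD^2u+(1-t)D^2 I_h u$, $\tau = D^2(u-I_h u)$ gives $\|\det D^2 u - \det D^2 I_h u\|_{0,K} \leq C\,\|\eta\|_{\infty,K}^{n-1}\,|u - I_h u|_{2,K}$. Since $\|\eta\|_{\infty,K}\leq\max(\|u\|_{2,\infty},\|I_h u\|_{2,\infty,\mathcal{T}_h})$ and the stability estimate \eqref{stable} with $k=2$, $p=\infty$ gives $\|I_h u\|_{2,\infty,\mathcal{T}_h}\leq(1+C_{ap})\|u\|_{2,\infty}$ (using $u\in W^{2,\infty}$, which holds since $2\leq l$), one gets $\|\eta\|_{\infty,K}\leq C\|u\|_{2,\infty}$. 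Summing squares over $K\in\mathcal{T}_h$ and invoking the approximation property \eqref{schum} with $k=2$, $p=2$ (so $\|u-I_h u\|_{2,\mathcal{T}_h}\leq C_{ap}h^{l-1}|u|_{l+1}$) produces
$$\|\det D^2 u - \det D^2 I_h u\|_{0,\mathcal{T}_h} \leq C\,\|u\|_{2,\infty}^{n-1}\,h^{l-1}\,|u|_{l+1}.$$

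To conclude, I would apply Cauchy--Schwarz over the elements to the identity for $\nu|e_h|_1^2$, obtaining $\nu|e_h|_1^2\leq\alpha^n\|\det D^2 u-\det D^2 I_h u\|_{0,\mathcal{T}_h}\|e_h\|_0$, then use the Poincar\'e inequality on $H^1_0(\Omega)$ ($\|e_h\|_0\leq C|e_h|_1$ and $\|e_h\|_1\leq C|e_h|_1$) to cancel one factor of $|e_h|_1$ and pass to the full $H^1$ norm; combining with the previous display and $|u|_{l+1}\leq\|u\|_{l+1}$ gives \eqref{ballcenter} with $C_1 = C/\nu$. I expect the only delicate point to be the uniform-in-$h$ control of the cofactor argument $\eta$: this is exactly what the $W^{2,\infty}$-stability \eqref{stable} of the interpolant provides, and it is why the final bound carries the harmless factor $\|u\|_{2,\infty}^{n-1}$ rather than a quantity degenerating as $h\to 0$. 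The rest is the standard marriage of a coercive-energy argument with the interpolation estimates, together with the consistency built into \eqref{m2}.
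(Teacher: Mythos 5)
Your proposal is correct and follows essentially the same route as the paper's proof: test with the error $e_h$ itself, use the consistency relation \eqref{m2} to swap $f$ for $\det D^2 u$, apply Lemma~\ref{MeanV} with \eqref{det-bound} to bound the determinant difference, and invoke the interpolation estimates \eqref{schum} and \eqref{stable} together with Poincar\'e's inequality. The only cosmetic difference is that the paper keeps the elementwise bound $\|z_h\|_{0,K}\leq C h^{l-1}\|u\|_{2,\infty}^{n-1}\|u\|_{l+1,K}$ and sums at the end, whereas you sum squares first; these are equivalent.
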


The next two lemmas establish the contraction mapping property of $T$ under the assumption that $d \geq 2$ and $\alpha=h^{3}$. 

\begin{lemma} \label{lem3}
For $h$ sufficiently small, and $\rho>0$, $T$ 
is a strict contraction mapping in the ball
$\alpha B_h(\rho)$,
i.e. for $v_h, w_h \in  B_h(\rho)$
$$
||T(\alpha v_h)-T(\alpha w_h)||_1 \leq a ||\alpha v_h-\alpha w_h||_1, 0<a< 1.
$$
\end{lemma}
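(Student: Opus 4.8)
The plan is to show that $T$ maps the ball $\alpha B_h(\rho)$ into itself and is a strict contraction there, with the contraction constant $a$ made small by choosing $\alpha = h^3$. First I would estimate $\|T(\alpha v_h)-T(\alpha w_h)\|_1$ directly from the defining equation \eqref{operatorB}. Subtracting the equation for $T(\alpha w_h)$ from that for $T(\alpha v_h)$ and testing with $w_h = T(\alpha v_h)-T(\alpha w_h) \in V_h\cap H^1_0(\Omega)$ gives
$$
\nu \|T(\alpha v_h)-T(\alpha w_h)\|_1^2 \lesssim \nu \|\alpha v_h - \alpha w_h\|_1 \, \|T(\alpha v_h)-T(\alpha w_h)\|_1 + \alpha^n \Big| \sum_{K} \int_K (\det D^2 v_h - \det D^2 w_h) w_h \ud x\Big|.
$$
Here the coercivity constant $\nu$ could in principle be replaced by a Poincaré-adjusted version, but since $\nu$ is a fixed positive constant the linear term contributes a fixed multiple of $\|\alpha v_h-\alpha w_h\|_1$, which is \emph{not} itself contractive; this is the point where the rescaling must do the work. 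The key is that for $v_h, w_h \in B_h(\rho)$ we have $\alpha v_h, \alpha w_h \in X_h$ by \eqref{ball-inclusion}, so $t D^2(\alpha v_h) + (1-t)D^2(\alpha w_h)$ has bounded eigenvalues uniformly by Lemma \ref{lem0}; but more importantly, applying Lemma \ref{MeanV} and Lemma \ref{cofbound} to $\alpha v_h$ and $\alpha w_h$ directly — rather than to $v_h, w_h$ — produces a factor $\alpha^{n-1}$ from the cofactor term, so that
$$
\alpha^n \Big| \sum_K \int_K (\det D^2 v_h - \det D^2 w_h) w_h \ud x \Big| = \Big| \sum_K \int_K (\det D^2(\alpha v_h) - \det D^2(\alpha w_h)) (\alpha^{-1}\cdots) \Big|,
$$
and after using \eqref{det-bound} with $\eta = tD^2(\alpha v_h)+(1-t)D^2(\alpha w_h)$, an inverse estimate \eqref{inverse} to pass from the $L^2$ norm of the test function to its $H^1$ norm, and $\|\alpha v_h - \alpha w_h\|_0 \lesssim \|\alpha v_h - \alpha w_h\|_1$, one obtains a bound of the form $C \alpha^{?} h^{?} \|\alpha v_h - \alpha w_h\|_1 \|T(\alpha v_h)-T(\alpha w_h)\|_1$ where the power of $h$ (coming from $\alpha^n$ times $\|\eta\|_\infty^{n-1} \sim \alpha^{n-1}$, divided by the $\alpha$ rescaling of the difference and times the $h^{-n/2}$-type inverse constant) is strictly positive when $\alpha = h^3$. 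Dividing through by $\|T(\alpha v_h)-T(\alpha w_h)\|_1$ then yields
$$
\|T(\alpha v_h)-T(\alpha w_h)\|_1 \leq \Big( C h^{?} \Big) \|\alpha v_h - \alpha w_h\|_1,
$$
which can be made $\leq a \|\alpha v_h - \alpha w_h\|_1$ with $a < 1$ for $h$ small enough, since the bracket tends to $0$. (One must be careful that the $\nu$-linear term is absorbed: in fact the correct bookkeeping writes the right-hand side of \eqref{operatorB} using $\cof D^2$ of the convex combination so that this term combines with the quadratic one; the construction $\nu = (M+m)/2$ is chosen precisely so that, together with the lower bound $m$ and upper bound $M$ from Lemma \ref{lem-1}, the linear part is itself a contraction up to the nonlinear remainder.)

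Concretely, the cleanest route is: rewrite \eqref{operatorB} so that $T(\alpha v_h)$ solves $\nu\int D T(\alpha v_h)\cdot Dw_h = \nu\int D(\alpha v_h)\cdot Dw_h + \alpha^n\sum_K\int_K(\det D^2 v_h)w_h - \alpha^n\int f w_h$; subtract the two instances and use Lemma \ref{MeanV} on $\det D^2(\alpha v_h)-\det D^2(\alpha w_h) = \cof(tD^2(\alpha v_h)+(1-t)D^2(\alpha w_h)):(D^2(\alpha v_h)-D^2(\alpha w_h))$, integrate by parts element-by-element moving one derivative onto $w_h$ (no jump terms survive because $w_h\in H^1_0$ and, crucially, the variational formulation is posed so the boundary/interelement contributions vanish after testing — this is exactly where "no jump term" is used and must be checked), giving an integrand $[\cof(\cdots)D(\alpha v_h-\alpha w_h)]\cdot Dw_h$ plus lower-order terms. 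Then Lemma \ref{lem-1}'s bound $\lambda_n(\cof(\cdots))\le M$ controls the principal part: combined with $\nu=(M+m)/2 > M/2$ this gives a genuine contraction factor $\le M/(2\nu) \cdot(\text{something}) < 1$ modulo the $W^{3,\infty}$ lower-order remainder, and that remainder carries the positive power of $h$ from $\alpha=h^3$ as above.

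The main obstacle I anticipate is the bookkeeping of powers of $h$: one has competing factors — $\alpha^n = h^{3n}$ from the rescaling in \eqref{operatorB}, $\alpha^{-1}$ (or $\alpha^{-(n-1)}$) when relating $\|D^2(\alpha v_h) - D^2(\alpha w_h)\|$ back to $\|v_h-w_h\|$ at the right scale, the factor $\|\eta\|_\infty^{n-1}\sim (\alpha M')^{n-1} = h^{3(n-1)}(M')^{n-1}$ from \eqref{det-bound}, and a negative power $h^{-n/2}$ (roughly) from the inverse estimate \eqref{inverse} used to trade $\|w_h\|_0$ against $\|w_h\|_1$ or to trade derivatives on $v_h-w_h$. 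Showing that the net exponent of $h$ is strictly positive for $\alpha = h^3$ in both dimensions $n=2$ and $n=3$ — and simultaneously that the linear $\nu$-term is genuinely subordinate because $\nu > M/2$ — is the delicate accounting at the heart of the lemma; everything else (well-posedness of $T$ by Lax–Milgram using \eqref{p-d-K}, nonemptiness of $B_h(\rho)$, measurability of the convex-combination cofactor) is routine and already supplied by the earlier lemmas.
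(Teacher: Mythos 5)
The proposal has a genuine gap at its core. You claim that after integrating by parts element-by-element, ``no jump terms survive because $w_h\in H^1_0$ and \dots the boundary/interelement contributions vanish after testing.'' This is false, and it is precisely why the rescaling $\alpha=h^3$ is needed in the first place. For a piecewise polynomial $v_h\in V_h\subset C^0(\overline{\Omega})$, the Hessian $D^2 v_h$, and hence $\cof D^2 v_h$ and the normal derivative of $w_h$, are discontinuous across element faces. On an interior face $e=\partial K_1\cap\partial K_2$ the two contributions $\int_e z_h\,[(\cof D^2 v_h)|_{K_i}\,D w_h|_{K_i}]\cdot n_{K_i}\,ds$ do \emph{not} cancel even though $z_h$ is single-valued, because the matrix and the normal derivative both jump. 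The phrase ``no jump term is used in the variational formulation'' in the paper refers to the \emph{discretization} (\eqref{m1h} does not add interior-penalty jump terms to the discrete problem, in contrast to \cite{Brenner2010b}); it does not mean the surviving boundary integrals in the \emph{analysis} disappear. The paper's proof keeps the element-boundary integral $\tfrac{\alpha^n}{\nu}\int_{\partial K} z_h[(\cof D^2 v_h)Dw_h]\cdot n\,ds$ explicitly in \eqref{TK-1} and bounds it in \eqref{pp3} using the scaled trace inequality \eqref{trace-inverse} together with inverse estimates, producing a factor $h^{-(1+n/2)(n-1)}$ that the choice $\alpha=h^3$ is designed to beat. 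Without controlling that surviving boundary term your estimate cannot close, so the proposal as written does not establish the lemma.

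The part of your argument that combines the linear $\nu$-term with the bilinear form arising from $\cof D^2$ to form $I-\tfrac{1}{\nu}\cof D^2(\alpha v_h)$, and the observation that $\nu=(M+m)/2$ together with Lemma \ref{lem-1} gives operator norm $\beta=(M-m)/(M+m)<1$, is correct and is exactly what the paper does for the bulk term; note, though, the correct constant is $(M-m)/(M+m)$, not $M/(2\nu)$ as you wrote. The paper also organizes the estimate differently: it defines local operators $T_K$, bounds the Fr\'echet derivative $T_K'$ elementwise (Step 1), applies the integral mean value theorem for the local Lipschitz bound (Step 2), and only then sums over $K$ (Step 3). Your direct global estimate on $T(\alpha v_h)-T(\alpha w_h)$ is a reasonable alternative organization, but it does not repair the missing boundary-term control, which is the heart of the lemma.
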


\begin{lemma} \label{lem4}
For $h$ sufficiently small and $\rho=  1/(1-a) C_1 \alpha^{n-1} h^{l-1} || u||_{l+1,\infty}^n$ where $C_1$ is the constant in Lemma \ref{lem2}, $T$ 
is a strict contraction in $\alpha B_h(\rho)$ and maps $\alpha B_h(\rho)$ into itself.
\end{lemma}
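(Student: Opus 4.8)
The plan is to read off the strict‑contraction assertion directly from Lemma~\ref{lem3} and to prove the self‑mapping assertion by the usual triangle‑inequality argument of the Banach fixed‑point theorem, centred at $I_hu$. Lemma~\ref{lem3} already states that $T$ is a strict contraction on $\alpha B_h(\rho)$, with some $a\in(0,1)$, for \emph{every} $\rho>0$ and every sufficiently small $h$; the $\rho$ in the statement of Lemma~\ref{lem4} is a particular (now $h$‑dependent) choice, and since with $\alpha=h^{3}$ it satisfies $\alpha\rho=C_1(1-a)^{-1}h^{\,3n+l-1}||u||_{l+1,\infty}^{n}$, with $3n+l-1\ge 7$ while $1+n/2\le 5/2$ for $n=2,3$ and $l\ge2$, one has $\alpha\rho\le\delta_h/2$ for $h$ small, so the inclusion \eqref{ball-inclusion}, $\alpha B_h(\rho)\subset X_h$, holds and Lemma~\ref{lem3} applies unchanged. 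In particular $\rho\to0$ as $h\to0$, so no circularity in the ``$h$ small enough'' thresholds arises.

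For the self‑mapping property, fix $v_h\in B_h(\rho)$. Because $v_h=g_h$ on $\partial\Omega$ and $\alpha v_h-T(\alpha v_h)=0$ on $\partial\Omega$, the function $w_h:=T(\alpha v_h)/\alpha$ lies in $V_h$ with $w_h=g_h$ on $\partial\Omega$, and $T(\alpha v_h)\in\alpha B_h(\rho)$ is equivalent to $||w_h-I_hu||_1\le\rho$, i.e. to $||T(\alpha v_h)-\alpha I_hu||_1\le\alpha\rho$. I would split
$$
||T(\alpha v_h)-\alpha I_hu||_1\le ||T(\alpha v_h)-T(\alpha I_hu)||_1+||T(\alpha I_hu)-\alpha I_hu||_1 .
$$
Since $I_hu\in B_h(\rho)$ (it is the centre of the ball), Lemma~\ref{lem3} bounds the first term by $a\,||\alpha v_h-\alpha I_hu||_1\le a\alpha\rho$. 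Lemma~\ref{lem2} bounds the second term — the displacement of the centre — by $C_1\alpha^{n}h^{l-1}||u||_{2,\infty}^{n-1}||u||_{l+1}$; since $\Omega$ is bounded and $l+1\ge3$, $||u||_{2,\infty}^{n-1}||u||_{l+1}\le C||u||_{l+1,\infty}^{n}$, so after absorbing the embedding constant into $C_1$ (as in Lemma~\ref{lem2}) this is at most $C_1\alpha^{n}h^{l-1}||u||_{l+1,\infty}^{n}=(1-a)\alpha\rho$ by the very definition of $\rho$. Adding, $||T(\alpha v_h)-\alpha I_hu||_1\le a\alpha\rho+(1-a)\alpha\rho=\alpha\rho$, hence $T(\alpha v_h)\in\alpha B_h(\rho)$, which is the claim.

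There is no genuine obstacle here: Lemma~\ref{lem4} is a repackaging of Lemmas~\ref{lem2} and~\ref{lem3}, with $\rho$ tuned so that the centre‑drift bound of Lemma~\ref{lem2} exactly consumes the slack $(1-a)\alpha\rho$ left by the contraction factor $a$. The only points demanding care are bookkeeping: confirming that the $h$‑dependent $\rho$ still gives $\alpha B_h(\rho)\subset X_h$ (done above through the exponent count $3n+l-1>1+n/2$), peeling off the single power of $\alpha$ that turns statements about $\alpha B_h(\rho)$ into statements about $B_h(\rho)$ so that the $\alpha^{n}$ of Lemma~\ref{lem2} matches the $\alpha^{n-1}$ in $\rho$, and the elementary norm reconciliation $||u||_{2,\infty}^{n-1}||u||_{l+1}\le C||u||_{l+1,\infty}^{n}$.
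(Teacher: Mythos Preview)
Your proposal is correct and follows essentially the same approach as the paper: the triangle-inequality split $||T(\alpha v_h)-\alpha I_hu||_1\le||T(\alpha v_h)-T(\alpha I_hu)||_1+||T(\alpha I_hu)-\alpha I_hu||_1$, bounding the first term by $a\alpha\rho$ via Lemma~\ref{lem3} and the second by $(1-a)\alpha\rho$ via Lemma~\ref{lem2} and the definition of $\rho$. Your additional bookkeeping (the exponent check ensuring $\alpha\rho\le\delta_h/2$, the observation that $\rho\to0$ so the assumption $\rho\le1$ in the proof of Lemma~\ref{lem3} is met, and the norm reconciliation $||u||_{2,\infty}^{n-1}||u||_{l+1}\le C||u||_{l+1,\infty}^{n}$) makes explicit details the paper leaves implicit, but the argument is the same.
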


The previous lemmas will readily allows us to conclude the solvability of \eqref{m1h} and derive error estimates in the $H^1$ norm by using the explicit expression of the radius $\rho$ of the above lemma. 
We can now give the proof of Theorem~\ref{errorest}.

\begin{proof}[Proof of Theorem~\ref{errorest}]
Since the mapping $T$ is a strict contraction which maps $\alpha B_h(\rho)$ into itself, the existence of a fixed point $\alpha u_h$ with $u_h \in B_h(\rho)$ follows from the Banach fixed point theorem. 
By Lemma \ref{lem1} $u_h$ solves \eqref{m1h}.

From the expression of $\rho$ given in Lemma \ref{lem4} we get using the value of $\alpha=h^3$
\begin{align*}
 ||u-u_h||_1& \leq  ||u-I_h u||_1 + ||I_h u - u_h||_1 \leq C h^{l} |u|_{l+1} + 2 C_1 \alpha^{n-1} h^{l-1}  ||u ||_{2,\infty}^{n-1} || u||_{l+1}\\
 & \leq C h^l + 2 C_1 h^{l+3n -4}  ||u ||_{2,\infty}^{n-1} || u||_{l+1} \leq C h^l ,
\end{align*}
which proves the $H^1$ error estimate.
By \eqref{schum} and \eqref{inverse},
\begin{align*}
  ||u-u_h||_{2,\mathcal{T}_h } & \leq   ||u-I_h u||_{2,\mathcal{T}_h } +   || I_h u - u_h||_{2,\mathcal{T}_h } \\
  & \leq  ||u-I_h u||_{2,\mathcal{T}_h } + h^{-1}  || I_h u - u_h||_1 \\
  & \leq C h^{l-1} |u|_{l+1} + C h^{l+3n -5}, 
\end{align*}
which proves that
$$
  ||u-u_h||_{2,\mathcal{T}_h }  \leq C h^{l-1} |u|_{l+1}.
$$
Finally we prove the convergence of the time marching method \eqref{time-marching}. Since $T$ is a strict contraction in $\alpha B_h(\rho)$, the sequence defined by $\alpha u_{k+1} = T(\alpha u_k)$, $u_{k+1}=u_k$ on $\partial \Omega$ converges linearly to $\alpha u_h$. Simplifying by $\alpha^n$, we get the convergence of \eqref{time-marching}.

\end{proof}

We conclude this section with the proofs of Lemmas \ref{lem0}--\ref{lem-1} and \ref{lem1}--\ref{lem4}.

\begin{proof}[Proof of Lemma~\ref{lem0}]
Recall that the eigenvalues of a (symmetric) matrix are continuous functions of its entries, as roots of the characteristic equation,  \cite{Ostrowski60} Appendix K, or \cite{Harris87}. Thus  for all $\epsilon > 0$, there exists $\delta > 0$ such that for $v \in W^{2,\infty}(\Omega)$, $|v-u|_{2,\infty} \leq \delta $
implies $|\lambda_1( D^2 v(x)) - \lambda_1( D^2 u(x))| < \epsilon$ a.e. in   $ \Omega$. 

By Assumption \ref{psc} $\lambda_1(  D^2 u(x)) \geq  m',$ a.e. in   $ \Omega$, and with $\epsilon=m'/2$, we get
$\lambda_1(D^2 v(x)) > m'/2, $ a.e. in   $ \Omega$. We conclude that for $|v-u|_{2,\infty} \leq \delta $, $\lambda_1( D^2 v(x)) > m'/2, $ a.e. in   $ \Omega$. 

Now, by \eqref{schum}, $|u-I_h u|_{2,\infty} \leq C_{ap} h^{d-1} |u|_{d+1,\infty}$. So for $h$ sufficiently small,  $|u-I_h u|_{2,\infty} \leq \delta/2$. Moreover by \eqref{inverse} and the assumption of the lemma
\begin{align*}
|v_h-I_h u|_{2,\infty} \leq C_{inv} h^{-1-\frac{n}{2}} ||v_h-I_h u||_1 \leq \frac{\delta}{2}.
\end{align*}
It follows that $\lambda_1( D^2 v_h(x)) > m'/2, $ a.e. in   $ \Omega$ as claimed. 

If necessary by taking $\delta$ smaller, we have  $|\lambda_n( D^2 v_h(x)) - \lambda_n( D^2 u(x))| < M'/2$ a.e. in   $ \Omega$. Thus
$\lambda_n( D^2 v_h(x)) \leq \lambda_n( D^2 u(x)) +M'/2 \leq 3 M'/2$. 
This concludes the proof.
\end{proof}

\begin{proof}[Proof of Lemma~\ref{lem-1}] We first note that by Lemma \ref{lem0}, there exists constants $m, M >0$ such that $
m \leq \lambda_1(\cof D^2 v_h(x)) \leq \lambda_n(\cof D^2 v_h(x))  \leq M $ a.e. in $\Omega$ for $v_h \in X_h$. 
To prove this, recall that for an invertible  matrix $A$,
$\cof A = (\det A) (A^{-1})^T$. Since a matrix and its transpose have the same set of eigenvalues, the eigenvalues of $\cof A$ are of the form $ \det A/\lambda_i$ where 
$\lambda_i, i=1,\ldots,n$ is an eigenvalue of $A$. Applying this observation to $A = D^2 u(x)$ and using Lemma \ref{lem0}, we obtain that the eigenvalues of
$\cof D^2 v_h(x)$ are a.e. uniformly bounded below by $m=(m')^{n}/M'$ and above by $M=(M')^{n}/m$.

Since $\lambda_1(D^2 v_h(x))$ and $\lambda_n(D^2 v_h(x))$ are the minimum
and maximum respectively of the Rayleigh quotient $[(\cof\, D^2 v_h(x)) z] \cdot z/||z||^2$, where $||z||$ denotes the standard Euclidean norm in $\R^n$, we have
$$
m' ||z||^2 \leq [(\cof\, D^2 v_h(x)) z] \cdot z \leq  M' ||z||^2, z \in \R^n. 
$$
This implies
\begin{equation*}
m |w|_{1,K}^2 \leq \int_{K} [(\cof\, D^2 v_h(x)) D w(x)] \cdot D w(x) \, \ud x \leq  M |w|_{1,K}^2, w \in H^1(K). 
\end{equation*}

\end{proof}

\begin{proof}[Proof of Lemma~\ref{lem1}]   
The existence of $T(\alpha v_h)$ solving \eqref{operatorB} is an immediate consequence of the Lax-Milgram lemma. 

If  $T(\alpha u_h) = \alpha u_h$, then 
\begin{align*} 
\alpha^n\sum_{K \in \mathcal{T}_h } \int_{K}  (\det D^2 u_h)  v_h \ud x = 
\alpha^n \int_{\Omega} f v_h \ud x, \forall v_h \in V_h \cap H^1_0(\Omega),
\end{align*}
and thus $u_h$ solves 
\eqref{m1h}. Conversely if $u_h$ solves \eqref{m1h},  $\alpha u_h$ is a fixed point of $T$.

\end{proof}

\begin{proof}[Proof of Lemma~\ref{lem2}]
From \eqref{schum} and \eqref{stable}, we obtain
\begin{align}
||u-I_h u||_2& \leq C h^{l-1} |u|_{l+1} \label{lem21} \\
 ||I_h u||_{2,K} & \leq C ||u||_{2 }. \label{lem22} 
\end{align}
Put $w_h = \alpha I_h u -T(\alpha I_h u)$ and note that $w_h \in H_0^1(\Omega)$. Since the exact solution solves \eqref{m2}, we have
\begin{align*}
\int_{\Omega} f w_h \ud x & =  \int_{\Omega}  (\det D^2 u)  w_h \ud x.
\end{align*}
With $v_h= I_h u$  in \eqref{operatorB}, we get
\begin{align} \label{partialB}
\begin{split}
\nu  \int_{\Omega} D [T(\alpha v_h)-\alpha v_h] \cdot D w_h \ud x& = \alpha^n \bigg( \sum_{K \in \mathcal{T}_h } \int_{K} (\det D^2 I_h u - \det D^2 u)  w_h \ud x \bigg). 
\end{split}
\end{align}
Put 
$$
z_h = \det D^2 I_h u - \det D^2 u.
$$
We have by Lemma \ref{MeanV} 
\begin{align*}
\begin{split}
z_h & = (\cof(t D^2 I_h u + (1-t) D^2 u)):(D^2 I_h u - D^2 u),
\end{split}
\end{align*}
for some $t \in [0,1]$.
Thus by Lemma \ref{cofbound}, \eqref{lem22}  and \eqref{lem21},
\begin{align*}
\begin{split}
||z_h||_{0,K} & \leq  C  || t D^2 I_h u + (1-t) D^2 u) ||_{\infty}^{n-1} ||D^2 I_h u -D^2 u||_{0,K} \\
& \leq C (||  I_h u||_{2, \infty} + ||u ||_{2,\infty})^{n-1} || I_h u - u||_{2,K} \\
&  \leq C  ||u ||_{2,\infty}^{n-1}h^{l-1} || u||_{l+1,K} \leq C h^{l-1}  ||u ||_{2,\infty}^{n-1} || u||_{l+1,K}.
\end{split}
\end{align*}
By Lemma \ref{lem1} and \eqref{partialB}, we get
\begin{align*}
\nu  |w_h|_1^2 & \leq C \alpha^n \sum_{K \in \mathcal{T}_h } ||z_h||_{0,K}  ||w_h||_{0,K}\\
&  \leq C \alpha^n h^{l-1}  ||u ||_{2,\infty}^{n-1} || u||_{l+1} ||w_h||_0 \leq   C \alpha^n h^{l-1}  ||u ||_{2,\infty}^{n-1} || u||_{l+1} ||w_h||_1.
\end{align*}
The result then follows by Poincare's inequality. 
\end{proof}

\begin{proof}[Proof of Lemma~\ref{lem3}] 
We define
$$
V_K = \{ \, v_h|_K, K \in \mathcal{T}_h, v_h \in X_h \, \},
$$
and denote by $V_K'$ the space of linear continuous functionals on $V_K$. For $F \in V_K'$, $||F||$ will denote the operator norm of $F$. We define a mapping $T_K: \alpha B_h(\rho)  \to V_K'$ defined by
\begin{align*}
\< T_K(\alpha v_h) , z_h \> & =  \alpha \int_{K} D v_h \cdot D z_h \ud x + \frac{\alpha^n}{\nu} \int_{K}  (\det D^2 v_h)  z_h \ud x  -  \frac{\alpha^n}{\nu}  \int_{K} f z_h  \ud x.
\end{align*}
Note that the restriction of elements of $\alpha B_h(\rho)$ to $K$ are in $V_K$.

{\bf Step 1}: We claim that for $v_h \in B_h(\rho)$ and $w_h \in V_K$, $||T_K'(\alpha v_h)(\alpha w_h)|| \leq  a ||w_h||_{1,K}$ for a constant $a$ such that $0 < a <1$  and 
$h$ sufficiently small.

\begin{align*} 
\begin{split}
\<T_K'(\alpha v_h)(\alpha w_h),z_h \> & =  \alpha \int_{K} D w_h \cdot D z_h \ud x + \frac{\alpha^n}{\nu} \int_{K}  [\div (\cof D^2 v_h) D w_h ]  z_h \ud x\\
& = \alpha \int_{K} D w_h \cdot D z_h \ud x - \frac{\alpha^n}{\nu} \int_{K}  [ (\cof D^2 v_h) D w_h ]  \cdot D z_h \ud x \\
& \qquad \qquad + \frac{\alpha^n}{\nu} \int_{\partial K}  z_h [ (\cof D^2 v_h) D w_h ]  \cdot n \ud s,
\end{split}
\end{align*}
and we used the expression of the derivative of the mapping $u \to \det D^2 u$ also used in the proof of Lemma \ref{MeanV}.
Therefore 
\begin{align} \label{TK-1}
\begin{split}
\<T_K'(\alpha v_h)(\alpha w_h),z_h \> & =  \alpha \int_{K} [ (I -  \frac{1}{\nu} \cof D^2  \alpha v_h  ) D w_h ]\cdot D z_h \ud x \\
& \qquad \qquad   + \frac{\alpha^n}{\nu} \int_{\partial K}  z_h [ (\cof D^2 v_h) D w_h ]  \cdot n \ud s,
\end{split}
\end{align}
where $I$ is the $n \times n$ identity matrix. We define
$$
\beta = \sup_{w_h \in V_K, ||w_h||_{1,K} = 1} \bigg|  \int_{K} [ (I -  \frac{1}{\nu} \cof D^2  \alpha v_h)  ) D w_h ]\cdot D w_h \ud x \bigg|.
$$
By \eqref{p-d-K} and \eqref{ball-inclusion}, we get
\begin{equation*}
(1-\frac{M}{\nu}) |w|_{1,K}^2 \leq \int_{K} [( I -\frac{1}{\nu}(\text{cof} \, D^2  \alpha v_h)) D w] \cdot D w \, \ud x \leq  (1-\frac{m}{\nu}) |w|_{1,K}^2, 
\end{equation*}
which gives by Poincare's inequality
\begin{equation*}
(1-\frac{M}{\nu}) \frac{ ||w||_{1,K}^2}{C_p^2} \leq \int_{K} [( I -\frac{1}{\nu}(\text{cof} \, D^2  \alpha v_h)) D w] \cdot D w \, \ud x \leq  (1-\frac{m}{\nu}) ||w||_{1,K}^2. 
\end{equation*}
Since $\nu =(M+m)/2$, $1-M/\nu= -(M-m)/(M+m)$ and $1-m/\nu=(M-m)/(M+m)$, we conclude that $\beta \leq \max \{ \, (M-m)/(M+m),  1/ C_p^2 (M-m)/(M+m) \, \}$. We may assume that $C_p \geq 1$ and thus
\begin{equation} \label{beta}
\beta \leq (M-m)/(M+m) < 1.
\end{equation}
Define $p_h= w_h/||w_h||_1$ and  $q_h= z_h/||z_h||_1$ for $w_h \neq 0$ and $v_h \neq 0$. Then 
\begin{align} \label{pp1}
\frac{\bigg|  \int_{K} [ (I -  \frac{1}{\nu} \cof D^2  \alpha v_h)  ) D w_h ]\cdot D z_h \ud x \bigg| }{||w_h||_1 ||z_h||_1}  =  \bigg|  \int_{K} [ (I -  \frac{1}{\nu} \cof D^2  \alpha v_h)  ) D p_h ]\cdot D q_h \ud x \bigg|.
\end{align}
We can define a bilinear form on $V_K$ by the formula
\begin{align*}
(p,q) & = \int_{\Omega} [(I -\frac{1}{\nu}(\text{cof} \, D^2  \alpha v_h)) D p] \cdot D q \, \ud x.
\end{align*}
Then because
$$
(p,q) = \frac{1}{4} ((p+q,p+q) - (p-q,p-q)),
$$
and using the definition of $\beta$, we get
$$
|(p_h,q_h)| \leq \frac{\beta}{4} ||p_h+q_h||_1^2 + \frac{\beta}{4}  ||p_h-q_h||_1^2 = \beta,
$$
since $p_h$ and $q_h$ are unit vectors in the $|| \, ||_1$ norm. It follows from \eqref{pp1} that
\begin{equation} \label{pp2}
\frac{\bigg|  \int_{K} [ (I -  \frac{1}{\nu} \cof D^2 \alpha  v_h)  ) D w_h ]\cdot D z_h \ud x \bigg| }{||w_h||_1 ||z_h||_1} \leq \beta.
\end{equation}
Next, we bound the second term on the right of \eqref{TK-1}. We need the scaled trace inequality
\begin{equation} \label{trace-inverse}
||v||_{L^2(\partial K) } \leq C h_K^{-\frac{1}{2}} ||v||_{L^2(K)} \, \forall v \in V_h.
\end{equation}
We have by Schwarz inequality and \eqref{trace-inverse}
\begin{align} \label{pp3}
\begin{split}
 \int_{\partial K}  z_h [ (\cof D^2 v_h) D w_h ]  \cdot n \ud s & \leq || (\cof D^2 w_h) D v_h ||_{0,K} ||z_h||_{0,K} \\
& \leq C  |w_h|_{2,\infty,K}^{n-1}  ||v_h||_{1,K}  ||z_h||_{0,K}\\
&\leq C h^{-(1+\frac{n}{2})(n-1)} ||w_h||_{1,K} ||v_h||_{1,K}  ||z_h||_{1,K}.
\end{split}
\end{align}
By \eqref{pp2} and  \eqref{pp3}, 
\begin{align*}
\frac{|\<T_K'(\alpha v_h)(\alpha w_h),z_h \> |}{||w_h||_{1,K}  ||z_h||_{1,K}} \leq \alpha(\beta + C \alpha^{n-1 } h^{-(1+\frac{n}{2})(n-1)} ||v_h||_{1,K} ).
\end{align*}

We conclude using the expression of  $\alpha=h^3$ and assuming $\rho \leq 1$ that
\begin{align*}
||T_K'(\alpha v_h)(\alpha w_h)|| & = \sup_{z_h \neq 0} \frac{|\<T_K'(\alpha v_h)(\alpha w_h),z_h \> |}{||z_h||_{1,K}} \\
& \leq \alpha (\beta + C h^{(2-\frac{n}{2})(n-1)} ||v_h||_{1,K} ) ||w_h||_{1,K}\\
& \leq \alpha  (\beta + C h^{(2-\frac{n}{2})(n-1)}  ||v_h - I_h u||_{1,K} + C  h^{(2-\frac{n}{2})(n-1)} ||I_h u||_{1,K} ) ||w_h||_{1,K} \\
&  \leq \alpha (\beta + C h^{(2-\frac{n}{2})(n-1)} \rho + C h^{(2-\frac{n}{2})(n-1)} || u||_{1} ) ||w_h||_{1,K} \\
&  \leq  (\beta + C h^{\frac{1}{2}} \rho + C   h^{\frac{1}{2}} || u||_{1}  ) || \alpha w_h||_{1,K},
\end{align*}
and we recall that $n=2,3$ allowing us to treat the two cases in a unifying fashion.

Since $\beta <1$, for $h$ sufficiently small $a=\beta + C h^{\frac{1}{2}} \rho + C   h^{\frac{1}{2}} || u||_{1}  <1$. 
This proves the result.

{\bf Step 2}:  The mapping $T_K$ is a strict contraction, i.e. for $v_h, w_h \in B_h(\rho)$, $||T_K(\alpha v_h) - T_K(\alpha w_h)|| \leq  a ||\alpha v_h-\alpha w_h||_{1,K}$, $0<a <1$. 

Using the mean value theorem
\begin{align*}
||T_K(\alpha v_h)-T_K(\alpha w_h)|| & = ||\int_0^1 T_K'(\alpha v_h +t(\alpha w_h-\alpha v_h))(\alpha w_h-\alpha v_h) \ud t || \\
& \leq \int_0^1 ||T_K'(\alpha v_h +t(\alpha w_h-\alpha v_h))(\alpha w_h-\alpha v_h)|| \ud t.
\end{align*}
Since $B_h(\rho)$ is convex, $v_h +t(w_h-v_h) \in B_h(\rho), t \in [0,1]$, and by the result established in step 1,
\begin{align*}
||T_K(\alpha v_h)-T_K(\alpha w_h)|| & \leq \int_0^1a ||\alpha w_h-\alpha v_h||_{1,K} \ud t =a ||\alpha w_h- \alpha v_h||_{1,K}.
\end{align*}

{\bf Step 3}: The mapping $T$ is a strict contraction in $\alpha B_h(\rho)$.

\begin{align*}
 \int_{\Omega} D(T(\alpha v_h) - T(\alpha w_h)) \cdot D \psi_h \ud x & =  \alpha \int_{\Omega} D(v_h - w_h) \cdot D \psi_h \ud x \\
&  \ + \frac{\alpha^n}{\nu}  \sum_{K \in \mathcal{T}_h }\int_{K}  ( \det D^2 v_h - \det D^2 w_h) \psi_h \ud x\\
& = \sum_{K \in \mathcal{T}_h } \< T_K(\alpha v_h) - T_K(\alpha w_h),\psi_h\>.
\end{align*}
With $\psi_h = T(\alpha v_h) - T(\alpha w_h)$, we obtain using the result from step 2.
\begin{align*}
 |T(\alpha v_h) - T(\alpha w_h)|_1^2 & \leq \sum_{K \in \mathcal{T}_h } || T_K(\alpha v_h) - T_K(\alpha w_h) || ||\psi_h||_{1,K} \\
& \leq a C_p  \sum_{K \in \mathcal{T}_h } ||\alpha v_h - \alpha w_h ||_{1,K} |\psi_h|_{1,K} \\
& \leq a C_p ||\alpha v_h-\alpha w_h||_1 |\psi_h|_1,
\end{align*}
where $C_p$ is the constant in the Poincare's inequality.
It follows that $||T(\alpha v_h) - T(\alpha w_h)||_1 \leq  a ||\alpha v_h-\alpha w_h||_1$.

\end{proof}

\begin{proof}[Proof of Lemma~\ref{lem4}]
Let $v_h \in B_h(\rho)$. Then by Lemma \ref{lem2}
\begin{align*}
||T(\alpha v_h) - \alpha I_h u ||_1 & \leq ||T(\alpha v_h) - T(\alpha I_h u)||_1 +  ||T(\alpha I_h u)-\alpha I_h u||_1 \\
& \leq a ||\alpha v_h - \alpha I_h u ||_1 +  C_1\alpha^n  h^{l-1} || u||_{l+1,\infty}^n \\
& = a ||\alpha v_h - \alpha I_h u ||_1 + (1-a)  \alpha \rho\\
& \leq a \alpha \rho + (1-a)    \alpha \rho \\
& \leq \alpha \rho,
\end{align*}
and we conclude that
$$
||T(\alpha v_h) - \alpha I_h u ||_1 \leq \alpha \rho.
$$
This proves the result.
\end{proof}

\begin{rem}
Let us assume that \eqref{m1h} has a strictly convex solution $u_h$ (independently of the smoothness of $u$). If in addition, its eigenvalues are bounded below and above by constants independent of $h$, then using again the continuity of the eigenvalues of a matrix as a function of its entries, we obtain the existence of $\delta'>0$ such that for $v_h$ in
$$
Y^h = \{ \, v_h \in V_h, v_h=g_h \, \text{on} \, \partial \Omega, ||v_h- u_h||_1 < C \delta' h^{1+\frac{n}{2}} \, \},
$$
$v_h$ is convex. It is not difficult to see that the mapping $T$ is also a strict contraction in $Y^h$ for $h$ sufficiently small. One obtains the linear  convergence of the iterative method \eqref{time-marching} to $u_h$ as follows:
\begin{align*}
||\alpha u^{k+1}_h - \alpha u_h||_1 & = ||T(\alpha u^{k}_h) - T(\alpha u_h)||_1 \leq a ||\alpha u^{k}_h - \alpha u_h||_1, 0< a <1. 
\end{align*}
Simplifying by $\alpha$ proves the claim.
\end{rem}

\section{Numerical Results} \label{numerical}
The implementation is done in Matlab. The computational domain is the unit square $[0,1]^2$
which is first divided into squares of side length $h$. Then each square is 
divided into two triangles by the diagonal with positive slope. We use standard test functions for numerical convergence to viscosity solutions of non degenerate Monge-Amp\`ere equations, i.e. for $f >0$ in $\Omega$.

Test 1: $u(x,y)=e^{(x^2+y^2)/2}$ with corresponding $f$ and $g$. This solution is infinitely differentiable.

Test 2:  $u(x,y)=-\sqrt{2-x^2-y^2}$ with corresponding $f$ and $g$. This solution is not in $H^2(\Omega)$.

Test 3: $g(x,y)=0$ and $f(x,y)=1$. No exact solution is known in this case.


For the test function in Test 1 which is a smooth function and the one in Test 3, we used the iterative method of Theorem \ref{errorest} with $\alpha=1$. For the non smooth solution of Test 2, we found the following truncated version more efficient. For $m=1,2, \ldots$, we consider truncating functions
$\chi_m(x)$ defined by $\chi_m(x)=-m $ for $x < -m $, $\chi_m(x)=x $ for $-m \leq x \leq m$ and $\chi_m(x)=m $ for $x > m $
and the sequence of problems
\begin{equation*}
\nu \int_{\Omega}  D u^{k+1,m}_h \cdot D v_h \ud x =  \nu \int_{\Omega}  D u^{k,m}_h \cdot D v_h \ud x + \sum_{K \in \mathcal{T}_h }  \int_{K} \chi_m     (\det D^2 u^{k,m}_h - f) v_h \ud x, 
\end{equation*}
with $ \ u^{k+1,m}_h=g_h \ \mathrm{on} \ \partial \Omega$. Also we use a broken $H^1$ norm in Table \ref{tab2}.

Compared with $C^1$ conforming approximations or mixed methods, the standard finite element method is less able to capture convex solutions. However we note the unusual high order convergence rate in the $L^2$ norm for the non smooth solution of Test 2. The optimal convergence rate of Theorem \ref{errorest} is an asymptotic convergence rate. 
For higher order elements, better numerical convergence rates are obtained with the iterative methods discussed in \cite{Awanou2009a}. In summary the method proposed in this paper is efficient for non smooth solutions and quadratic elements.

\begin{table}
 \begin{tabular}{c|cccc} 
 \multicolumn{4}{c}{}\\
$h$ & $||u-u_h||_{L^2}$ &  rate &  $||u-u_h||_{H^1}$&  rate  \\ \hline
$1/2$ &4.38 $10^{-2}$ & & 2.05 $10^{-1}$&   \\  
$1/4$ & 2.18 $10^{-2}$& 1.00& 1.04 $10^{-1}$&0.98  \\  
$1/8$ & 9.00 $10^{-3}$ & 1.28& 4.19 $10^{-2}$&1.31\\ 
$1/16$ & 2.76 $10^{-3}$ & 1.70& 1.28 $10^{-2}$& 1.71\\ 
$1/32$ & 7.35 $10^{-4}$ & 1.91& 3.40 $10^{-3}$&1.91 \\ 
$1/64$ & 1.86 $10^{-4}$ &1.98 & 8.65 $10^{-4}$& 1.97 \\ 
\end{tabular} 
\bigskip
\caption{Test 1 $d=2, \nu=50$} \label{tab1}
\end{table}

\begin{table}
 \begin{tabular}{c|cccc} 
 \multicolumn{4}{c}{}\\
$h$ & $||u-u_h||_{L^2}$ &  rate &  $||u-u_h||_{H^1_h}$&  rate  \\ \hline
$1/16$ &    1.79 $10^{-1}$ &  &  1.1718 & \\ 
$1/32$ &   6.54 $10^{-2}$ &1.45 &  5.47 $10^{-1}$ & 1.10 \\ 
$1/64$ &   1.24 $10^{-2}$ & 2.40 &  1.52 $10^{-1}$ &1.85 \\ 
$1/128$ &   2.10 $10^{-3}$ &2.56 &  6.00 $10^{-2}$ & 1.34 \\ 
$1/256$ &   4.91 $10^{-4}$ & 2.09&  4.27 $10^{-2}$ & 0.49 \\ 
$1/512$ &   1.29 $10^{-4}$ & 1.93 &  3.34 $10^{-2}$ & 0.35\\ 
\end{tabular} 
\bigskip
\caption{Test 2 $d=2, \nu=150, m=250$} \label{tab2}
\end{table}

\begin{figure}[tbp]
\begin{center}
\includegraphics[angle=0, height=4.5cm]{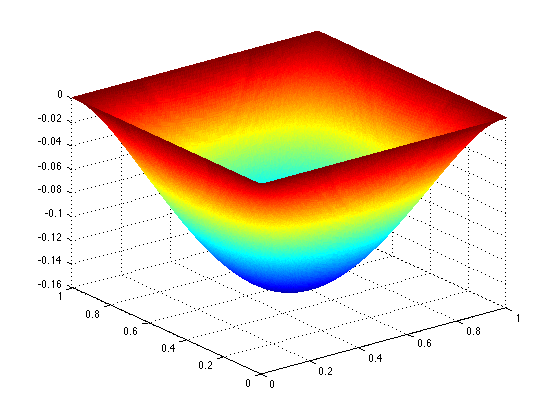}
\end{center}
\caption{Test 3 $d=2, h=1/2^7, \nu=50$
} \label{fig1}
\end{figure}

\end{document}